\definecolor{myc}{cmyk}{0.0009,0.8,0.8,0.00}
 \numberwithin{equation}{section}
\newcommand{\N}{{\mathbb N}}
\newcommand{\Z}{{\mathbb Z}}
\newcommand{\R}{{\mathbb R}}
\def\norm2#1{\Vert#1\Vert_{L^2(\R^2;dx)}}
\def\p{\partial}
\newtheorem{Thm}{Theorem}[section]
\newtheorem{thm}[Thm]{Theorem}
\newtheorem{lem}[Thm]{Lemma}
\newtheorem{prop}[Thm]{Proposition}
\newtheorem{defi}[Thm]{Definition}
\begin{document}

\title{Structure constants of the Weyl calculus}
\author{Wen DENG}
\begin{address}{Wen Deng, Institut de Math\'ematiques de Jussieu, Universit\'e Pierre-et-Marie-Curie (Paris 6), 4 place Jussieu, 75005 Paris, France.}
\email{wendeng@math.jussieu.fr}
\urladdr{http://www.math.jussieu.fr/~wendeng/}
\end{address}

\keywords{Weyl calculus, phase space, Fefferman-Phong inequality}
\subjclass[2000]{ }
\date{\today}
\begin{abstract} 
We find some explicit bounds on the ${\mathcal L}(L^2)$-norm of pseudo-differential operators with symbols defined by a metric on the phase space. In particular, we prove that this norm depends only on the ``structure constants" of the metric and a fixed semi-norm of the symbol. Analogous statements are made for the Fefferman-Phong inequality.
\end{abstract}
\maketitle

\section{Introduction}

The class of symbols $S^m_{1,0}$ consists of smooth functions $a$ defined on the phase space $\R^{n}\times\R^n$ such that for all multi-indices $\alpha,\beta$,
\begin{equation}\label{eq.1}
 |(\p_\xi^\alpha\p_x^\beta a)(x,\xi)|\leq C_{\alpha,\beta}(1+|\xi|)^{m-|\alpha|}.
\end{equation}
The best constants $C_{\alpha,\beta}$ in \eqref{eq.1} are called the semi-norms of the symbol $a$ in the Fr\'echet space $S^m_{1,0}$. 
We have

\medskip

{\it {\bf Property A.} If $a$ is in $S^0_{1,0}$, then $a(x,D)$ defines a bounded operator on $L^2(\R^n)$.}

\medskip

One might ask some very natural questions: the operator norm $\|a(x,D)\|_{{\mathcal L}(L^2(\R^n))}$ is bounded by which constant? Is it a semi-norm of the symbol $a$? If yes, then which semi-norm?
Questions of the same type might be asked for the constant $C$ in the following inequality:

\medskip

{\it {\bf Property B (Fefferman-Phong inequality).} If $a$ is a non-negative symbol belonging to $S^2_{1,0}$, then there exists $C>0$ such that, for all $u\in {\mathcal S}(\R^n)$,}
\begin{equation}\label{eq.2}
{\rm Re}\langle a(x,D)u,u\rangle_{L^2(\R^n)}+C\|u\|_{L^2(\R^n)}^2\geq0.
\end{equation}

\medskip

We can pose similar questions in many other examples of classes of symbols, such as the semi-classical symbols, Shubin's class, etc. As a particular example, the class $\Sigma^m$, defined as the set of smooth functions $a$ on $\R^{n}\times\R^n\times\R^+$ such that for all multi-indices $\alpha,\beta$,
\begin{equation}\label{eq.3}
\forall x ,\xi\in\R^n,\ \tau\in\R^+,\quad|(\p_\xi^\alpha\p_x^\beta a)(x,\xi,\tau)|\leq C_{\alpha,\beta}(1+|\xi|+\tau)^{m-|\alpha|},
\end{equation} 
is useful for Carleman estimates. One would like to check the Property A and Property B independent of the parameter $\tau$.

Several authors like Bony \cite{bony}, Boulkhemair \cite{boulkhemair}, Lerner-Morimoto \cite{LM}, have already considered these questions and they were able to identify the constants.
The constants in Properties A, B are always a constant $C_n$ times a semi-norm of the symbol, whose order depends only on the dimension $n$.
Although the problem is well-understood for a single class of pseudo-differential calculus, including the class $S(m,g)$ developed by H\"ormander, we want to address a more general and useful question, having in mind the class $\Sigma^m$ depending on the non-compact parameter $\tau\geq0$ which is defined in \eqref{eq.3} and is useful for Carleman estimates. 

In this paper, we consider the Weyl quantization for pseudodifferential operators and we choose the framework with a metric $g$ on the phase space. The metric $g$ is assumed to be admissible, that is slowly varying, satisfying the uncertainty principle and is temperate (see Definition \ref{def.s.m}, \ref{def.metric} below). The so-called structure constants of $g$ are closely related to these properties.  
We can define very general classes of symbols $S(m,g)$ attached to the metric $g$ and a $g$-admissible weight $m$ (see Definition \ref{def.symbol}) and we have an effective symbolic calculus.
The following results are classical: (see \cite[chapter 18]{hor}, \cite[chapter 2]{lerner})
\begin{eqnarray}\label{eq.4}
\text{\it $L^2$-boundedness:}\ &a\in S(1,g)&\Longrightarrow \|a^w\|_{{\mathcal L}(L^2(\R^n))}\leq C,\\
\label{eq.5}\text{\it Fefferman-Phong:}\ & a\in S(\lambda_g^2,g),\ a\geq0 &\Longrightarrow a^w+C\geq0.
\end{eqnarray}
The question that we would like to address is the following: what happens if we change the metric $g$ but keep the same structure constants?

We intend to show that the constants involved in \eqref{eq.4}, \eqref{eq.5} depend only on the structure constants of the metric $g$ and a fixed semi-norm of $a$. 
 Since it may happen that the metric $g$ depends on a non-compact parameter with uniform structure constants (e.g. the class $\Sigma^m$), this fact is useful explicitly or implicitly in many examples where these metrics are used and it seems useful to rely on a more stable argument than referring to ``inspection of the proofs".

\smallskip

{\bf Remark.} An abstract functional analysis argument does not seem to work.
Our method is to follow the proofs, by carefully computing all the constants.

\section{Metric on the phase space}
In this section, we introduce the definitions of the admissible metric and exhibit its properties.
We use the Weyl quantization which associates to a symbol $a$ the operator $a^w$ defined by
\begin{equation}\label{eq.weyl}
(a^wu)(x)=\iint e^{2i\pi(x-y)\cdot\xi}a(\frac{x+y}{2},\xi)u(y)dyd\xi.
\end{equation}
Consider the symplectic space $\R^{2n}$ equipped with the symplectic form $\sigma=\sum_{j=1}^nd\xi^j\wedge dx^j.$
Given a positive-definite quadratic form $\Gamma$ on $\R^{2n}$, we define
\begin{equation}\label{eq.dual.met}
\Gamma^\sigma(T)=\sup_{\Gamma(Y)=1}\sigma(T,Y)^2,
\end{equation}
which is also a positive-definite quadratic form. 
Let $g$ be a measurable map from $\R^{2n}$ into the cone of positive-definite quadratic forms on $\R^{2n}$, i.e. for each $X\in\R^{2n}$, $g_X$ is a positive definite quadratic form on $\R^{2n}$.
\begin{defi}[Slowly varying metric]\label{def.s.m}
We say that $g$ is a slowly varying metric on $\R^{2n}$, if there exists $C_0\geq1$ such that for all $X,Y,T\in\R^{2n}$,
\begin{equation}\label{eq.slow}
g_X(X-Y)\leq C_0^{-1}\Longrightarrow C_0^{-1}\leq \frac{g_X(T)}{g_Y(T)}\leq C_0.
\end{equation}
\end{defi}
\begin{defi}[Slowly varying weight]\label{def.s.w}
Let $g$ be a slowly varying metric on $\R^{2n}$. A function $m\colon\R^{2n}\to(0,+\infty)$ is called a $g$-slowly varying weight if there exists $\mu_m\geq1$ such that for all $X,Y\in\R^{2n}$,
\begin{equation}\label{eq.w.slow}
g_X(Y-X)\leq \mu_m^{-1}\Longrightarrow \mu_m^{-1}\leq\frac{m(X)}{m(Y)}\leq \mu_m.
\end{equation}
\end{defi}
\begin{defi}[Class of symbols]\label{def.symbol}
Let $g$ be a slowly varying metric on $\R^{2n}$ and $m$ be a $g$-slowly varying weight. The class of  symbols $S(m,g)$ is defined as the subset of functions $a\in C^\infty(\R^{2n})$ satisfying that for all $k\in\N$, there exists $C_k>0$ such that for all $X,T_1,\cdots,T_k\in\R^{2n}$, 
$$|a^{(k)}(X)(T_1,\cdots,T_k)|\leq C_km(X)\prod_{1\leq j\leq k}g_X(T_j)^{1/2}.$$
For $a\in S(m,g)$, $l\in\N$, we denote
\begin{equation}\label{seminorm}
\|a\|_{S(m,g)}^{(l)}=\max_{0\leq k\leq l}\sup_{\substack{X,T_j\in\R^{2n}\\ g_X(T_j)=1}}|a^{(k)}(X)(T_1,\cdots,T_k)|m(X)^{-1}.
\end{equation}
The space $S(m,g)$ equipped with the countable family of semi-norms $(\|\cdot\|_{S(m,g)}^{(l)})_{l\in\N}$ is a Fr\'echet space.
\end{defi}
For a slowly varying metric $g$ on the phase space $\R^{2n}$, we can introduce some partition of unity related to $g$. Define the $g$-ball near $X\in\R^{2n}$
\begin{equation}\label{def.ball}
U_{X,r}=\{Y,g_X(X-Y)\leq r^2\},
\end{equation}
we have the following theorem, which is Theorem 2.2.7 in \cite{lerner}.
\begin{thm}[Partition of unity]\label{thm.partition}
Let $g$ be a slowly varying metric on $\R^{2n}$ and $C_0>0$ given in (\ref{eq.slow}). Then for all $r\in(0,C_0^{-1/2}]$, there exists a family $(\varphi_Y)_{Y\in\R^{2n}}$ of smooth functions supported in $U_{Y,r}$ such that 
\begin{equation}\label{eq.sn.parti}
\forall k\in\N, \quad \sup_{Y\in\R^{2n}}\|\varphi_Y\|_{S(1,g)}^{(k)}\leq C(k,r,n,C_0),
\end{equation}
\begin{equation}\label{eq.parti}
\forall X\in\R^{2n},\quad\int_{\R^{2n}}\varphi_Y(X)|g_Y|^{1/2}dY=1,
\end{equation}
where $C(k,r,n,C_0)$ is a positive constant depending only on $k,r,n,C_0$ and  $|g_Y|$ is the determinant of $g_Y$ with respect to the standard Euclidean norm.
\end{thm}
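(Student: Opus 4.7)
The plan is to adapt the standard continuous partition of unity construction. I would fix a smooth one-variable cutoff $\chi_0\colon[0,\infty)\to[0,1]$ with $\chi_0=1$ on $[0,r^2/4]$ and support in $[0,r^2]$, and set
$$f_Y(X)=\chi_0\bigl(g_Y(X-Y)\bigr),$$
which is smooth in $X$ and supported in $U_{Y,r}$ by construction. The first step is to check that $\{f_Y\}_{Y\in\R^{2n}}$ belongs to $S(1,g)$ with semi-norms bounded uniformly in $Y$. This reduces to a chain-rule computation: differentiating the quadratic form $X\mapsto g_Y(X-Y)$ in $X$ produces exactly factors of $g_Y^{1/2}$, and on the support of $f_Y$ one has $g_Y(X-Y)\le r^2\le C_0^{-1}$, so the slow-variation hypothesis yields $g_Y\asymp g_X$ (with constants depending only on $C_0$). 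This converts $g_Y$-bounds on derivatives into the desired $g_X$-bounds.

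The second step is to analyze the normalizing integral
$$F(X)=\int_{\R^{2n}}f_Y(X)|g_Y|^{1/2}\,dY.$$
For the upper bound, the $Y$-support is contained in $\{Y:g_X(X-Y)\le C_0 r^2\}$, whose Lebesgue measure is comparable to $|g_X|^{-1/2}$ times a constant depending on $n$, $C_0$, $r$; combined with $|g_Y|^{1/2}\le C(n,C_0)|g_X|^{1/2}$ on this set, one obtains $F(X)\le C(n,C_0,r)$. For the lower bound I would restrict the integration to $Y$ with $g_X(X-Y)\le c^2$, choosing $c$ so that $C_0 c^2\le r^2/4$, on which $f_Y(X)=1$; the contribution is bounded below by a positive constant of the same dependence. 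Differentiating under the integral and invoking the uniform $S(1,g)$-control of $f_Y$ established in step one shows $F\in S(1,g)$ with semi-norms of the required type.

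The third step is to define $\varphi_Y(X)=f_Y(X)/F(X)$. The support condition and the normalization identity \eqref{eq.parti} are then immediate. Since $F$ is bounded below by a positive constant, $1/F$ is smooth, and its successive derivatives are polynomials in the derivatives of $F$ divided by powers of $F$; hence $1/F\in S(1,g)$ with semi-norms controlled by those of $F$ and by the positive lower bound. The Leibniz rule combined with the uniform $S(1,g)$-bounds on $f_Y$ and $1/F$ then gives \eqref{eq.sn.parti}. I expect the main obstacle to be the bookkeeping rather than any single deep step: one must verify at each stage that the constants depend only on $k,r,n,C_0$, and in particular that the transition from $g_Y$-estimates to $g_X$-estimates (needed both in step one and when differentiating $F$ under the integral) is controlled solely by the slow-variation constant $C_0$, which is guaranteed by the threshold $r\le C_0^{-1/2}$.
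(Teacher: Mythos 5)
Your construction is essentially identical to the paper's: both build a localized bump $f_Y(X)=\chi_0(g_Y(X-Y))$ (the paper writes it as $\chi_0(r^{-2}g_Y(X-Y))$, with the scaling factored out of the cutoff), show uniform $S(1,g)$ bounds on $f_Y$ using slow variation, establish two-sided bounds and $S(1,g)$ membership for the normalizing integral, and then divide. The only cosmetic difference is that the paper inserts an auxiliary cutoff $\chi_1$ when differentiating the normalizing integral to make the localization of the $Y$-integration explicit; your phrasing ``differentiating under the integral and invoking the uniform $S(1,g)$-control'' would need that same localization spelled out, but the idea is the same.
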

\begin{proof}
As in the proof of Theorem 2.2.7 in \cite{lerner}, let $\chi_0\in C_0^\infty(\R_+;[0,1])$ non-increasing such that $\chi_0(t)=1$ on $t\leq1/2$, $\chi_0(t)=0$ on $t\geq1$. Define for $r\in(0,C_0^{-1/2}]$,
$$\omega(X,r)=\int_{\R^{2n}}\underbrace{\chi_0\big(r^{-2}g_Y(X-Y)\big)}_{=\omega_Y(X)}|g_Y|^{1/2}dY.$$
Since $\omega_Y(X)$ is supported in $U_{Y,r}$ and $\chi_0$ is non-increasing, by \eqref{eq.slow} we have
$$\omega(X,r)\geq\int_{R^{2n}}\chi_0\big(r^{-2}C_0g_X(X-Y)\big)C_0^{-n}|g_X|^{1/2}dY=\int_{\R^{2n}}\chi_0(|Z|^2)dZC_0^{-2n}r^{2n},$$
and an estimate from above of the same type, i.e. there exists a positive constant $C_1=C_1(r,n,C_0)$ such that
$$C_1^{-1}\leq \omega(X,r)\leq C_1.$$
Now let us check the derivatives of $\omega_Y(X)$. Using the notation $\langle,\rangle_Y$ the inner-product associated to $g_Y$, we have
$$\omega_Y'(X)T=\chi_0'\big(r^{-2}g_Y(X-Y)\big)r^{-2}\langle X-Y,T\rangle_Y,$$
and by induction, for $k\geq1$, $T\in\R^{2n}$, $\omega_Y^{(k)}(X)T^k$ is a finite sum of terms of type
\begin{equation}\label{eq.parti01}
c_{p,k}\chi_0^{(p)}\big(r^{-2}g_Y(X-Y)\big)r^{-2p}\langle X-Y,T\rangle_Y^{2p-k}g_Y(T)^{k-p},
\end{equation}
where $c_{p,k}$ is a constant depending only on $p,k$ and $p\in[k/2,k]\cap\N$. Since the support of $\chi_0^{(p)}$ is included in $[0,1]$ and $r^2\leq C_0^{-1}$, the term \eqref{eq.parti01} can be bounded from above by
$$c_{p,k}\|\chi_0^{(p)}\|_{L^\infty}r^{-2p}(r^2)^{(2p-k)/2}C_0^{k/2}g_X(T)^{k/2},$$
so that for all $k\geq1$, $|\omega_Y^{(k)}(X)T^k|\leq C(k,r,C_0)g_X(T)^{k/2}$. This implies that $\omega_Y$ is in $S(1,g)$ and moreover,  
\begin{equation}\label{eq.parti02}
\forall k\in\N,\quad\sup_{Y\in\R^{2n}}\|\omega_Y\|_{S(1,g)}^{(k)}\leq C(k,r,C_0).
\end{equation}
Now we choose a non-negative function $\chi_1\in C_0^\infty(\R_+;[0,1])$ such that $\chi_1(t)=1$ on $t\leq1$, then
\begin{align*}
|\omega^{(k)}(X,r)T^k|&=\big|\int_{\R^{2n}}\omega_Y^{(k)}(X)T^{k}\chi_1\big(r^{-2}g_Y(X-Y)\big)|g_Y|^{1/2}dY\big|\\
&\leq\sup_{Y\in\R^{2n}}\|\omega_Y\|_{S(1,g)}^{(k)}g_X(T)^{k/2}\int_{\R^{2n}}\chi_1\big(r^{-2}g_Y(X-Y)\big)|g_Y|^{1/2}dY\\
&\leq C(k,r,n,C_0)g_X(T)^{k/2},
\end{align*} 
which implies that $\omega(\cdot,r)$ is a symbol in $S(1,g)$ with $\|\omega(\cdot,r)\|_{S(1,g)}^{(k)}\leq C'(k,r,n,C_0)$.
Since $\omega$ is bounded from below by $C_1^{-1}$, the function $\omega(\cdot,r)^{-1}$ is also in $S(1,g)$ and 
\begin{equation}\label{eq.parti03}
\|\omega(\cdot,r)^{-1}\|_{S(1,g)}^{(k)}\leq C''(k,r,n,C_0).
\end{equation}
We define 
$$\varphi_Y(X)=\omega_Y(X)\omega(X,r)^{-1},$$
then the estimate \eqref{eq.sn.parti} follows from \eqref{eq.parti02}, \eqref{eq.parti03} and moreover, the family $(\varphi_Y)_{Y\in\R^{2n}}$ satisfies the requirements of Theorem \ref{thm.partition}. 
\end{proof}
A direct consequence of Theorem \ref{thm.partition} is the following.
\begin{prop}\label{prop.parti.a}
Let $g$ be a slowly varying metric on $\R^{2n}$ and $m$ be a $g$-slowly varying weight. Let $C_0,\mu_m$ be given in (\ref{eq.slow}), (\ref{eq.w.slow}) respectively. Let $a$ be a symbol in $S(m,g)$. Then for all $0<r\leq \min(C_0^{-1/2},\mu_m^{-1/2})$, 
$$a(X)=\int_{\R^{2n}}a_Y(X)|g_Y|^{1/2}dY,$$
where $a_Y$ has support included in $U_{Y,r}$ and
\begin{equation}\label{eq.parti.a}
\forall k\in\N,\quad\sup_{Y\in\R^{2n}}\|a_Y\|_{S(m(Y),g_Y)}^{(k)}\leq C(k,r,C_0,n,\mu_m)\|a\|_{S(m,g)}^{(k)}.
\end{equation}
\end{prop}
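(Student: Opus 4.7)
The plan is to take the partition of unity $(\varphi_Y)_{Y\in\R^{2n}}$ provided by Theorem \ref{thm.partition} and simply set $a_Y(X)=a(X)\varphi_Y(X)$. The decomposition identity $a(X)=\int a_Y(X)|g_Y|^{1/2}dY$ then follows at once from \eqref{eq.parti}, and the support property $\text{supp}\, a_Y\subset U_{Y,r}$ is inherited from $\varphi_Y$. So the entire content of the proposition is the seminorm estimate \eqref{eq.parti.a}.

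To bound $\|a_Y\|_{S(m(Y),g_Y)}^{(k)}$, I would apply the Leibniz rule to $a_Y=a\varphi_Y$: for $X\in\R^{2n}$ and $T_1,\dots,T_k\in\R^{2n}$ with $g_Y(T_j)=1$,
$$a_Y^{(k)}(X)(T_1,\dots,T_k)=\sum_{I\sqcup J=\{1,\dots,k\}}a^{(|I|)}(X)(T_I)\,\varphi_Y^{(|J|)}(X)(T_J).$$
Only points $X\in U_{Y,r}$ contribute, because $\varphi_Y$ is supported there. The choice $r\leq\min(C_0^{-1/2},\mu_m^{-1/2})$ is precisely what allows the slow-variation hypotheses \eqref{eq.slow} and \eqref{eq.w.slow} to be activated at such $X$: we get $g_X(T)\leq C_0\,g_Y(T)$ and $m(X)\leq\mu_m\,m(Y)$. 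Hence on the support of $\varphi_Y$, the test vectors with $g_Y(T_j)=1$ satisfy $g_X(T_j)\leq C_0$, and an individual factor is controlled by
$$|a^{(|I|)}(X)(T_I)|\leq \|a\|_{S(m,g)}^{(|I|)}\, m(X)\prod_{j\in I}g_X(T_j)^{1/2}\leq \|a\|_{S(m,g)}^{(|I|)}\,\mu_m\,C_0^{|I|/2}\,m(Y),$$
while the corresponding factor from $\varphi_Y\in S(1,g)$ is bounded, via the same slow-variation step, by $\|\varphi_Y\|_{S(1,g)}^{(|J|)}C_0^{|J|/2}$.

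Summing the Leibniz terms and using \eqref{eq.sn.parti} yields a constant $C(k,r,C_0,n,\mu_m)$ independent of $Y$, so that
$$\sup_{Y}\|a_Y\|_{S(m(Y),g_Y)}^{(k)}\leq C(k,r,C_0,n,\mu_m)\,\|a\|_{S(m,g)}^{(k)},$$
which is exactly \eqref{eq.parti.a}. No single step is genuinely hard; the only place to be careful is making sure the slow-variation conversions between $g_X\leftrightarrow g_Y$ and $m(X)\leftrightarrow m(Y)$ are legitimate, and this is guaranteed precisely by the restriction $r\leq\min(C_0^{-1/2},\mu_m^{-1/2})$ on the size of the partition cells.
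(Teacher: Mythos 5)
Your proof is correct and follows exactly the same path as the paper: set $a_Y=a\varphi_Y$ with $\varphi_Y$ from Theorem \ref{thm.partition}, note that only $X\in U_{Y,r}$ contribute, apply Leibniz, and then use the slow-variation hypotheses (valid because $r\leq\min(C_0^{-1/2},\mu_m^{-1/2})$) to convert $g_X(T)\leq C_0\,g_Y(T)$ and $m(X)\leq\mu_m\,m(Y)$, picking up the constant $C(k)\mu_m C_0^{k/2}\sup_Y\|\varphi_Y\|_{S(1,g)}^{(k)}$.
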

\begin{proof}
Define $a_Y(X)=a(X)\varphi_Y(X)$. Since $\varphi_Y$ is supported in $U_{Y,r}$, we have, for $k\geq0$, $X\in U_{Y,r}$, $T\in\R^{2n}$,
\begin{align*}
|a_Y^{(k)}(X)T^k|&=\Big|\sum_{0\leq l\leq k}\binom{k}{l}a^{(l)}(X)T^l\cdot\varphi_Y^{(k-l)}(X)T^{k-l}\Big|\\
&\leq \sum_{0\leq l\leq k}c_{k,l}\|a\|_{S(m,g)}^{(l)}m(X)g_X(T)^{l/2}\|\varphi_Y\|_{S(1,g)}^{(k-l)}g_X(T)^{(k-l)/2}\\
&\leq C(k)\|a\|_{S(m,g)}^{(k)}\|\varphi_Y\|_{S(1,g)}^{(k)}m(X)g_X(T)^{k/2}\\
&\leq C(k)\mu_mC_0^{k/2}\|a\|_{S(m,g)}^{(k)}\|\varphi_Y\|_{S(1,g)}^{(k)}m(Y)g_Y(T)^{k/2},
\end{align*}
which completes the proof.
\end{proof}

For two positive-definite quadratic forms $\Gamma_1,\Gamma_2$ on $\R^{2n}$, the harmonic mean $\Gamma_1\wedge \Gamma_2$ is defined by
\begin{equation}\label{eq.harm}
\Gamma_1\wedge \Gamma_2=2(\Gamma_1^{-1}+\Gamma_2^{-1})^{-1},
\end{equation}
which is also a positive-definite quadratic form on $\R^{2n}$.

\begin{defi}[Admissible metric]\label{def.metric}
We say that $g$ is an admissible metric on $\R^{2n}$ if $g$ is slowly varying (see Definition \ref{def.s.m}) and there exist $C_0'>0$, $N_0\in\N$ such that for all $X,Y,T\in\R^{2n}$, 
\begin{align}
\label{eq.uncertain}&\text{uncertainty principle }& & g_X(T)\leq g_X^\sigma(T),\\
\label{eq.temper}&\text{temperance }& & g_X(T)\leq C_0'g_Y(T)\big(1+(g_X^\sigma\wedge g_Y^\sigma)(X-Y)\big)^{N_0},
\end{align}
where $g^\sigma$ is given by (\ref{eq.dual.met}) and $\wedge$ given by (\ref{eq.harm}).

We may suppose $C_0'=C_0$ in the sequel, where $C_0$ is given in (\ref{eq.slow}). Then the constants $(C_0,N_0)$ appearing in (\ref{eq.slow}), (\ref{eq.temper}) are called the {\rm\bf structure constants} of the metric $g$.
\end{defi}

\begin{defi}[Admissible weight]\label{def.weight}
Suppose that $g$ is an admissible metric on $\R^{2n}$. A function $m\colon\R^{2n}\to(0,+\infty)$ is called a $g$-admissible weight if $m$ is a $g$-slowly varying weight (see Definition \ref{def.s.w}) and there exist $\mu_m>0$, $\nu_m\in\N$ such that for all $X,Y\in\R^{2n}$,
\begin{align}
\label{eq.w.temper}&m(X)\leq \mu_mm(Y)\big(1+(g_X^\sigma\wedge g_Y^\sigma)(X-Y)\big)^{\nu_m}.
\end{align}
The constants $(\mu_m,\nu_m)$ appearing in (\ref{eq.w.slow}), (\ref{eq.w.temper}) are called the structure constants of the $g$-admissible weight $m$.
\end{defi}

Let $g$ be an admissible metric on $\R^{2n}$. We define for $X\in\R^{2n}$,
\begin{equation}\label{def.lambda}
\lambda_g(X)=\inf_{T\neq0}\Big(\frac{g_X^\sigma(T)}{g_X(T)}\Big)^{1/2}.
\end{equation}
Then the uncertainty principle \eqref{eq.uncertain} can be expressed by 
$$g_X\leq\lambda_g(X)^{-2}g_X^\sigma,\quad \lambda_g(X)\geq1.$$

\begin{lem}[{\cite[Remark 2.2.17]{lerner}}]\label{lem.lambda}
For any $s\in\R$, $\lambda_g^s$ is an admissible weight, with structure constants $(\mu_{\lambda_g^s},\nu_{\lambda_g^s})$ in (\ref{eq.w.slow}), (\ref{eq.w.temper}) depending only on the structure constants of the metric $g$ $(C_0,N_0)$.
\end{lem}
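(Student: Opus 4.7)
The plan is to derive the admissibility of $\lambda_g^s$ from a pair of estimates on $\lambda_g$ itself, namely the slow-variation bound
\[
g_X(X-Y)\leq C_0^{-1}\ \Longrightarrow\ C_0^{-1}\leq \lambda_g(X)/\lambda_g(Y)\leq C_0,
\]
and the temperance bound
\[
\lambda_g(X)\leq C_0\,\lambda_g(Y)\bigl(1+(g_X^\sigma\wedge g_Y^\sigma)(X-Y)\bigr)^{N_0},
\]
both with constants depending only on $(C_0,N_0)$. Raising to the $s$-th power (and swapping the roles of $X$ and $Y$ when $s<0$) then yields slow variation and temperance for $\lambda_g^s$, with structure constants $\mu_{\lambda_g^s}=\max(C_0,C_0^{|s|})$ and $\nu_{\lambda_g^s}=N_0|s|$.

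The key tool is an elementary duality principle: inspecting \eqref{eq.dual.met}, the pointwise inequality $g_X\leq C\,g_Y$ between positive-definite quadratic forms implies $g_Y^\sigma\leq C\,g_X^\sigma$, because the sup defining $g_X^\sigma(T)$ only grows when the denominator $g_X(Z)$ is replaced by the larger quantity $C\,g_Y(Z)$. Applying this to the slow-variation hypothesis \eqref{eq.slow} on $g$, one obtains $C_0^{-1}g_Y^\sigma\leq g_X^\sigma\leq C_0\,g_Y^\sigma$ whenever $g_X(X-Y)\leq C_0^{-1}$; combined with \eqref{eq.slow} itself, this yields $C_0^{-2}\,g_Y^\sigma(T)/g_Y(T)\leq g_X^\sigma(T)/g_X(T)\leq C_0^{2}\,g_Y^\sigma(T)/g_Y(T)$ pointwise, and taking the infimum over $T\neq 0$ produces the first bound above.

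The temperance estimate is obtained in exactly the same way, starting from \eqref{eq.temper}: dualizing \eqref{eq.temper} yields $g_Y^\sigma\leq C_0\,\Delta^{N_0}\,g_X^\sigma$, where $\Delta=1+(g_X^\sigma\wedge g_Y^\sigma)(X-Y)$, and combining with \eqref{eq.temper} itself gives $g_X^\sigma(T)/g_X(T)\leq C_0^{2}\,\Delta^{2N_0}\,g_Y^\sigma(T)/g_Y(T)$ for every $T$. Taking the infimum in $T$ then gives $\lambda_g(X)^2\leq C_0^{2}\,\Delta^{2N_0}\,\lambda_g(Y)^2$, which is the desired estimate.

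No genuine obstacle is expected; the only delicate point is bookkeeping with constants, to ensure that the threshold $\mu_{\lambda_g^s}^{-1}$ in the slow variation hypothesis for $\lambda_g^s$ is at most $C_0^{-1}$ (so as to trigger slow variation of $g$) while simultaneously $\mu_{\lambda_g^s}$ dominates $C_0^{|s|}$ (so as to bound the ratio $(\lambda_g(X)/\lambda_g(Y))^s$). The choice $\mu_{\lambda_g^s}=\max(C_0,C_0^{|s|})$ handles both constraints at once.
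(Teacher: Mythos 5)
Your proof follows the same route as the paper: derive slow-variation and temperance of $\lambda_g$ by combining each bound on $g$ with its dualized counterpart on $g^\sigma$, then raise to the $s$-th power, and your choice $\mu_{\lambda_g^s}=\max(C_0,C_0^{|s|})$ correctly handles the case $|s|<1$ that the paper's stated $\mu_{\lambda_g^s}=C_0^{|s|}$ glosses over. One small slip in the temperance step: to bound $g_X^\sigma(T)/g_X(T)$ from above you need the pair $g_X^\sigma\leq C_0\Delta^{N_0}g_Y^\sigma$ and $g_X\geq C_0^{-1}\Delta^{-N_0}g_Y$, both obtained by applying \eqref{eq.temper} with $X$ and $Y$ interchanged and then dualizing the first, whereas you wrote the un-swapped dual $g_Y^\sigma\leq C_0\Delta^{N_0}g_X^\sigma$ (a lower bound on $g_X^\sigma$, which goes the wrong way) --- but since \eqref{eq.temper} holds for all $X,Y$ with $\Delta$ symmetric, this is trivially repaired.
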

\begin{proof}
We first verify that $\lambda_g^s$ is a $g$-slowly varying weight. For $g_X(X-Y)\leq C_0^{-1}$, $T\in\R^{2n}$, we have
$$C_0^{-1}g_X(T)\leq g_Y(T)\leq C_0g_X(T),\quad C_0^{-1}g_X^\sigma(T)\leq g_Y^\sigma(T)\leq C_0g_X^\sigma(T),$$
which implies
$$C_0^{-2}\frac{g_X^\sigma(T)}{g_X(T)}\leq\frac{g_Y^\sigma(T)}{g_Y(T)}\leq C_0^2\frac{g_X^\sigma(T)}{g_X(T)}.$$
Taking the infimum with respect to $T$, we get 
$$C_0^{-2}\lambda_g(X)^2\leq\lambda_g(Y)^2\leq C_0^2\lambda_g(X)^2,$$ 
so that $\lambda_g$ is $g$-slowly varying with $\mu_{\lambda_g}=C_0$ and so is $\lambda_g^s$ with $\mu_{\lambda_g^s}=C_0^{|s|}$. Next we check that $\lambda_g^s$ is temperate. We have for all $X,Y,T\in\R^{2n}$,
\begin{align*}
g_X(T)&\geq C_0^{-1}g_Y(T)\big(1+(g_X^\sigma\wedge g_Y^\sigma)(X-Y)\big)^{-N_0},\\
g_X^\sigma(T)&\leq C_0g_Y^\sigma(T)\big(1+(g_X^\sigma\wedge g_Y^\sigma)(X-Y)\big)^{N_0},
\end{align*}
which gives 
$$\lambda_g(X)^2\leq C_0^2\lambda_g(Y)^2\big(1+(g_X^\sigma\wedge g_Y^\sigma)(X-Y)\big)^{2N_0}.$$
Thus $\lambda_g$ is temperate with $\nu_{\lambda_g}=N_0$ and so is $\lambda_g^s$ with $\nu_{\lambda_g^s}=|s|N_0$. This completes the proof of Lemma \ref{lem.lambda}.
\end{proof}

The composition $a\sharp b$ of two symbols is defined by $a^wb^w=(a\sharp b)^w$ and we have, with the notations $[X,Y]=\sigma(X,Y)$, $D=(2i\pi)^{-1}\partial$, 
\begin{equation}\label{eq.compo1}
(a\sharp b)(X)=2^{2n}\iint_{\R^{2n}\times\R^{2n}}a(Y)b(Z)e^{-4i\pi[X-Y,X-Z]}dYdZ,
\end{equation} 
\begin{equation}\label{eq.compo2}
(a\sharp b)(X)=\exp\big(i\pi[D_Y,D_Z]\big)\big(a(Y)b(Z)\big)_{|Y=Z=X}.
\end{equation}
For $a\in S(m_1,g)$, $b\in S(m_2,g)$, we have the asymptotic expansion
\begin{equation}
\label{eq.compo.asy} a\sharp b(x,\xi)=\sum_{0\leq k<p}w_k(a,b)(x,\xi)+r_p(a,b)(x,\xi),
\end{equation}
\begin{align}
\label{eq.compo.asy1}
\text{with}\quad w_k(a,b)=2^{-k}\sum_{|\alpha|+|\beta|=k}\frac{(-1)^{|\beta|}}{\alpha!\beta!}D_\xi^\alpha\partial_x^\beta a\ D_\xi^\beta\partial_x^\alpha b \ &\in S(m_1m_2\lambda_g^{-k},g),\\
\label{eq.compo.asy2}
r_p(a,b)(X)=\quad R_p\big(a(X)\otimes b(Y)\big)_{|Y=X} \ &\in S(m_1m_2\lambda_g^{-p},g), 
\end{align}
\begin{equation}
\label{eq.compo.asy3}
R_p=\int_0^1\frac{(1-\theta)^{p-1}}{(p-1)!}\exp\frac{\theta}{4i\pi}[\partial_X,\partial_Y]d\theta\Big(\frac{1}{4i\pi}[\partial_X,\partial_Y]\Big)^p
\end{equation}
Notice $w_1(a,b)=\frac{1}{4i\pi}\{a,b\}$, where $\{,\}$ denotes the Poisson bracket, so that the asymptotic \eqref{eq.compo.asy} at $p=2$ is 
\begin{equation}\label{eq.compo.asy4}
a\sharp b=ab+\frac{1}{4i\pi}\{a,b\}+r_2(a,b).
\end{equation}

\begin{defi}[The main distance function]
Let $g$ be an admissible metric on $\R^{2n}$. Define the main distance function, for $r>0$, $X,Y\in\R^{2n}$,
\begin{equation}\label{eq.dis}
\delta_r(X,Y)=1+(g_X^\sigma\wedge g_Y^\sigma)(U_{X,r}-U_{Y,r}),
\end{equation}
where $U_{X,r}$ is given in \eqref{def.ball} and
$$g(U-V)=\inf_{X\in U,Y\in V}g(X-Y).$$
\end{defi}

\begin{lem}[{\cite[Lemma 2.2.24]{lerner}}, Integrability of $\delta_r$]\label{lem.distance}
Let $g$ be an admissible metric with structure constants $(C_0,N_0)$. Then there exist positive constants $N_1=N_1(n,C_0,N_0)$, $C=C(n,C_0,N_0)$ such that for all $r\in(0,C_0^{-1/2}]$,
\begin{equation}\label{eq.dis01}
\sup_{X\in\R^{2n}}\int_{\R^{2n}}\delta_r(X,Y)^{-N_1}|g_Y|^{1/2}dY\leq C<+\infty,
\end{equation}

\end{lem}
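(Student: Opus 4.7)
The plan is to discretize the integral via a Whitney-type family of centers and to estimate the resulting dyadic shells using a variational decomposition of the harmonic mean together with temperance.

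First I would use Theorem \ref{thm.partition} (or a direct Vitali construction based on slow variation) to produce a family $(Y_j)_{j\in J}$ whose balls $U_{Y_j, r/2}$ cover $\R^{2n}$ with bounded overlap, depending only on $(n, C_0)$. Slow variation \eqref{eq.slow} gives $|g_Y|^{1/2} \asymp |g_{Y_j}|^{1/2}$ and $\delta_r(X, Y) \ge c(n, C_0)\,\delta_r(X, Y_j)$ on $U_{Y_j, r/2}$, so that
$$\int_{\R^{2n}} \delta_r(X,Y)^{-N_1} |g_Y|^{1/2}\,dY \;\le\; C(r,n,C_0) \sum_{j \in J} \delta_r(X, Y_j)^{-N_1}.$$
It then suffices to bound the dyadic shells $J_k := \{ j \in J : 2^k \le \delta_r(X, Y_j) < 2^{k+1} \}$ polynomially in $2^k$ and to take $N_1$ large enough.

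The essential step is to exploit the harmonic mean through the variational identity $(\Gamma_1 \wedge \Gamma_2)(T) = 2 \inf_{T = T_1 + T_2}[\Gamma_1(T_1) + \Gamma_2(T_2)]$. For $j \in J_k$, this produces a decomposition $X - Y_j = T_1 + T_2$ with $g_X^\sigma(T_1), g_{Y_j}^\sigma(T_2) \lesssim 2^k$. The uncertainty principle \eqref{eq.uncertain} then yields $g_X(T_1), g_{Y_j}(T_2) \lesssim 2^k$, and temperance \eqref{eq.temper} converts $g_{Y_j}(T_2) \lesssim 2^k$ into $g_X(T_2) \lesssim 2^{k(N_0+1)}$. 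Hence $g_X(X - Y_j) \lesssim 2^{k(N_0+1)}$, so all the $Y_j$ with $j \in J_k$ lie in a $g_X$-ball of radius $\lesssim 2^{k(N_0+1)/2}$. Combining the bounded overlap of the $U_{Y_j, r/2}$ in Lebesgue measure with the determinant comparison $|g_{Y_j}|^{-1/2} \ge c(1+2^k)^{-nN_0} |g_X|^{-1/2}$, derived from $g_{Y_j} \le C(1+2^k)^{N_0} g_X$, produces the packing bound $|J_k| \le C(r, n, C_0, N_0)\, 2^{k N_2}$ with $N_2 = n(2N_0 + 1)$.

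Finally, taking $N_1 > N_2$ makes the geometric series $\sum_k 2^{-k(N_1 - N_2)}$ converge, giving the lemma with a constant depending a priori on $r$. The fact that $C$ can be taken independent of $r \in (0, C_0^{-1/2}]$ follows from the monotonicity observation that the integrand $\delta_r(X, Y)^{-N_1}$ is non-decreasing in $r$ (since $r \mapsto U_{X,r}$ is monotone and one takes an infimum over a growing pair of sets in the definition of $\delta_r$), so it suffices to treat the extremal case $r = C_0^{-1/2}$, where the implicit $r$-dependence collapses into the structure constants. The main obstacle is the lack of direct lower comparability of $\Gamma_1 \wedge \Gamma_2$ with its factors, resolved by the variational decomposition; once $X - Y_j$ has been split into a $g_X^\sigma$-small and a $g_{Y_j}^\sigma$-small piece, temperance and the uncertainty principle combine cleanly to yield polynomial control in $\delta_r(X, Y_j)$, with the explicit exponent $N_2 = n(2N_0 + 1)$ dictating a valid choice of $N_1$.
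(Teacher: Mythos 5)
Your strategy---discretize via a slowly-varying Whitney cover of $\R^{2n}$, sort the centers $Y_j$ into dyadic shells $J_k$ according to $\delta_r(X,Y_j)$, and bound $|J_k|$ by packing $g_X$-balls---is a genuinely different route from the paper's, which stays at the level of the continuous integral: the paper first proves the pointwise comparisons \eqref{eq.dis02}, \eqref{eq.dis03}, deduces $1+g_X(X-Y)\lesssim\delta_r(X,Y)^{2N_0+1}$ and $|g_Y|^{1/2}/|g_X|^{1/2}\lesssim\delta_r(X,Y)^{nN_0}$, and then changes variables to land on $\int(1+|Z|^2)^{-(n+1)}dZ$. Your architecture does work, and the exponent $N_2 = n(2N_0+1)$ you extract for the shell count is of the same order as what drives the paper's choice of $N_1$. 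The observation that $\delta_r(X,Y)^{-N_1}$ is non-decreasing in $r$, so that it suffices to handle $r=C_0^{-1/2}$, is also correct and neatly eliminates the $r$-dependence of the constant.

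There is, however, a gap in the central temperance step. You assert that for $j\in J_k$ the variational identity yields a splitting $X-Y_j = T_1+T_2$ with $g_X^\sigma(T_1), g_{Y_j}^\sigma(T_2)\lesssim 2^k$, and that temperance then converts $g_{Y_j}(T_2)\lesssim 2^k$ into $g_X(T_2)\lesssim 2^{k(N_0+1)}$. Both of these uses implicitly require $(g_X^\sigma\wedge g_{Y_j}^\sigma)(X-Y_j)\lesssim 2^k$; but $\delta_r(X,Y_j)\sim 2^k$ only says that the \emph{infimum} of $(g_X^\sigma\wedge g_{Y_j}^\sigma)(X'-Y')$ over $X'\in U_{X,r}$, $Y'\in U_{Y_j,r}$ is $\lesssim 2^k$. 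A priori the unshifted quantity can be much larger, and the uncertainty principle runs the wrong way to close this: $g_X(X-X')\le r^2$ does not bound $g_X^\sigma(X-X')$, since $g_X\le g_X^\sigma$. The fix is exactly the paper's shift-and-compare computation: apply the variational decomposition to $X'-Y'$ (not $X-Y_j$) for infimum-achieving $X',Y'$, invoke temperance at $X'$ and $Y'$, and translate back to $X$ and $Y_j$ by slow variation (using $r\le C_0^{-1/2}$), paying only powers of $C_0$. This is precisely what produces the auxiliary inequalities \eqref{eq.dis02}--\eqref{eq.dis03}; once you have them (or their discrete analogue), the bound $g_X(X-Y_j)\lesssim 2^{k(N_0+1)}$ and the subsequent packing count are justified. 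As written, the estimate is circular, and that is the one step you need to repair.
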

\begin{proof} Suppose $r\leq C_0^{-1/2}$.
Using the slowness and temperance of $g$, for $X'\in U_{X,r}$, $Y'\in U_{Y,r}$, $T\in\R^{2n}$, we have
\begin{align*}
(g_X^\sigma\wedge g_Y^\sigma)(T)\geq C_0^{-1}(g_{X'}^\sigma\wedge g_{Y'}^\sigma)(T)&\geq C_0^{-2}g_{X'}^\sigma(T)\big(1+(g_{X'}^\sigma\wedge g_{Y'}^\sigma)(X'-Y')\big)^{-N_0}\\
&\geq C_0^{-3}g_{X}^\sigma(T)\big(1+C_0(g_{X}^\sigma\wedge g_{Y}^\sigma)(X'-Y')\big)^{-N_0}\\
&\geq C_0^{-3-N_0}g_{X}^\sigma(T)\big(1+(g_{X}^\sigma\wedge g_{Y}^\sigma)(X'-Y')\big)^{-N_0}.
\end{align*}
Taking the infimum in $X'\in U_{X,r}$, $Y'\in U_{Y,r}$, we get 
\begin{equation}\label{eq.dis03}
g_X^\sigma(T)\leq C_0^{3+N_0}\delta_r(X,Y)^{N_0}(g_X^\sigma\wedge g_Y^\sigma)(T).
\end{equation}
We have also
\begin{align*}
\frac{g_X(T)}{g_Y(T)}\leq C_0^2\frac{g_{X'}(T)}{g_{Y'}(T)}&\leq C_0^3\big(1+(g_{X'}^\sigma\wedge g_{Y'}^\sigma)(X'-Y')\big)^{N_0}\\
&\leq C_0^3\big(1+C_0(g_{X}^\sigma\wedge g_{Y}^\sigma)(X'-Y')\big)^{N_0}\\
&\leq C_0^{3+N_0}\big(1+(g_{X}^\sigma\wedge g_{Y}^\sigma)(X'-Y')\big)^{N_0}.
\end{align*}
By taking the infimum in $X', Y'$, we get the following inequality 
\begin{equation}\label{eq.dis02}
\frac{g_X(T)}{g_Y(T)}\leq C_0^{3+N_0}\delta_r(X,Y)^{N_0}.
\end{equation}
Then
\begin{align*}
1+&g_X(X-Y)\leq 1+3g_X(X-X')+3g_X(X'-Y')+3g_X(Y'-Y)\\
&\leq 3C_0^{3+N_0}\delta_r(X,Y)^{N_0}\big(1+g_X(X-X')+g_X(X'-Y')+g_Y(Y'-Y)\big)\quad\text{\small by \eqref{eq.dis02}}\\
&\leq 3C_0^{3+N_0}\delta_r(X,Y)^{N_0}\big(1+2r^2+g_X^\sigma(X'-Y')\big)\\
&\leq 9C_0^{6+2N_0}\delta_r(X,Y)^{2N_0}\big(1+(g_X^\sigma\wedge g_Y^\sigma)(X'-Y')\big)\quad\text{\small by \eqref{eq.dis03}},
\end{align*}
so that $1+g_X(X-Y)\leq 9C_0^{6+2N_0}\delta_r(X,Y)^{2N_0+1}$. In the other hand, we have 
$$\frac{|g_Y|^{1/2}}{|g_X|^{1/2}}\leq C_0^{n(3+N_0)}\delta_r(X,Y)^{nN_0},$$
so that for $N_1=nN_0+(n+1)(2N_0+1)>0$,
\begin{align*}
\int_{\R^{2n}}\delta_r(X,Y)^{-N_1}&|g_Y|^{1/2}dY\leq C(n,C_0,N_0)\int_{\R^{2n}}\delta_r(X,Y)^{-N_1+nN_0}|g_X|^{1/2}dY\\
&\leq C'(n,C_0,N_0)\int_{\R^{2n}}\big(1+g_X(X-Y)\big)^{-(n+1)}|g_X|^{1/2}dY\\
&= C'(n,C_0,N_0)\int_{\R^{2n}}(1+|Z|^2)^{-(n+1)}dZ<+\infty.
\end{align*}
The proof of the lemma is complete.
\end{proof}


\section{$L^2$-boundedness}
In this section, we prove the $L^2$-boundedness of pseudo-differential operators with symbol in $S(1,g)$ and make precise the operator norms.
\subsection{The constant metric case}

\begin{prop}\label{prop.const}
Suppose that $g$ is a positive-definite quadratic form (constant metric) on $\R^{2n}$ with $g\leq g^\sigma$. Then there exists a constant $C(n)>0$ depending only on the dimension $n$ such that for all $a\in S(1,g)$, 
$$\|a^w\|_{{\mathcal L}(L^2(\R^n))}\leq C(n)\|a\|_{S(1,g)}^{(2n+1)}.$$ 
\end{prop}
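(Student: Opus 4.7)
The plan is to reduce the statement to the classical Calderón-Vaillancourt theorem by a symplectic diagonalization of the constant quadratic form $g$. Williamson's normal form, applied to the positive-definite quadratic form $g$ on the symplectic space $(\R^{2n},\sigma)$, produces a linear symplectic isomorphism $\chi$ and positive reals $\lambda_1,\ldots,\lambda_n$ such that in the new symplectic coordinates $T=(x,\xi)$,
$$g(T)=\sum_{j=1}^n \lambda_j(x_j^2+\xi_j^2),\qquad g^\sigma(T)=\sum_{j=1}^n \lambda_j^{-1}(x_j^2+\xi_j^2).$$
The hypothesis $g\leq g^\sigma$ is then equivalent to $\lambda_j\leq 1$ for every $j$.

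Next I invoke the metaplectic covariance of the Weyl calculus: there is a unitary $U_\chi$ on $L^2(\R^n)$ with $(a\circ\chi)^w = U_\chi^{-1}\, a^w\, U_\chi$, so that $\|a^w\|_{\mathcal{L}(L^2)}=\|(a\circ\chi)^w\|_{\mathcal{L}(L^2)}$, and $a\circ\chi$ lies in $S(1,\chi^*g)$ with identical semi-norms. Thus I may assume $g$ is already in the diagonal form above with $\lambda_j\leq 1$. Under this assumption, expanding the definition of the $S(1,g)$ semi-norms in the symplectic coordinates gives
$$|\p_x^\alpha\p_\xi^\beta a(x,\xi)|\leq C(n)\|a\|_{S(1,g)}^{(|\alpha|+|\beta|)}\prod_{j=1}^n \lambda_j^{(\alpha_j+\beta_j)/2}\leq C(n)\|a\|_{S(1,g)}^{(|\alpha|+|\beta|)},$$
using $\lambda_j\leq 1$. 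Hence $a$ lies in the classical class $S^0_{0,0}$ with semi-norms controlled by those in $S(1,g)$, and the Calderón-Vaillancourt theorem for $S^0_{0,0}$ produces the claimed bound with $2n+1$ derivatives.

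The main obstacle is establishing Calderón-Vaillancourt with the specific semi-norm order $2n+1$, rather than merely the qualitative existence of some finite-order bound. The cleanest route is a Cotlar-Stein argument: decompose $a=\sum_{Q\in\Z^{2n}} a_Q$ via a unit-scale partition of unity on the phase space, then estimate the off-diagonal operator norms $\|a_Q^w (a_{Q'}^w)^*\|$ and $\|(a_Q^w)^* a_{Q'}^w\|$ by integration by parts on the Schwartz kernels. This produces decay in $\langle Q-Q'\rangle$, and the Cotlar-Stein summability condition on the $2n$-dimensional lattice $\Z^{2n}$ demands a decay exponent strictly greater than $2n$, matching precisely the order $2n+1$ of the semi-norm $\|a\|_{S(1,g)}^{(2n+1)}$. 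An alternative, more in keeping with the paper's program, is to run the Cotlar-Stein argument directly with the $g$-adapted partition of unity supplied by Theorem \ref{thm.partition}, bypassing the metaplectic reduction altogether but requiring careful bookkeeping of all constants in the off-diagonal estimates.
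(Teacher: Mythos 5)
Your proposal follows essentially the same route as the paper: symplectic diagonalization of $g$ via Williamson's normal form, the observation that $g\leq g^\sigma$ forces $g$ to be dominated by the Euclidean metric $\Gamma_0$ in the diagonalizing symplectic coordinates so that $S(1,g)\hookrightarrow S(1,\Gamma_0)=S^0_{0,0}$ with semi-norms that do not increase, and an appeal to Calder\'on--Vaillancourt with $2n+1$ derivatives (the paper uses metaplectic covariance implicitly in the coordinate change, as you make explicit). The paper settles the $2n+1$-derivative version of Calder\'on--Vaillancourt by citing Lerner's Theorem~1.1.4 combined with the identity $a^w=(J^{1/2}a)(x,D)$ rather than re-deriving it by Cotlar--Stein as you sketch, but this is a difference of reference, not of method.
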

\begin{proof}
Since $g$ is a constant metric, according to Lemma 4.4.25 in \cite{lerner}, there exist symplectic coordinates $(x,\xi)$ such that 
$$g=\sum_{1\leq j\leq n}\lambda_j^{-1}(|dx_j|^2+|d\xi_j|^2),\quad g^\sigma=\sum_{1\leq j\leq n}\lambda_j(|dx_j|^2+|d\xi_j|^2),$$
with $\lambda_j>0$. $g\leq g^\sigma$ is expressed as
$$\min_{1\leq j\leq n}\lambda_j\geq1.$$
As a result, we have $g\leq|dx|^2+|d\xi|^2:=\Gamma_0$, which implies $S(1,g)\subset S(1,\Gamma_0)$ and for all $a\in S(1,g)$, 
\begin{equation}\label{eq.cm1}
\forall l\in\N,\quad\|a\|_{S(1,\Gamma_0)}^{(l)}\leq\|a\|_{S(1,g)}^{(l)}.
\end{equation}
By Theorem 1.1.4 in \cite{lerner} and $a^w=(J^{1/2}a)(x,D)$, where $J^t$ is introduced in Lemma 4.1.2 in \cite{lerner}, we obtain that
$$\|a^w\|_{{\mathcal L}(L^2(\R^n))}\leq C(n)\|a\|_{S(1,\Gamma_0)}^{(2n+1)},$$
where $C(n)$ depends only on $n$. Together with \eqref{eq.cm1}, we complete the proof of the proposition.
\end{proof}

\subsection{The general case}

\begin{thm}\label{thm.continuity}
Let $g$ be an admissible metric on $\R^{2n}$ with structure constants $(C_0,N_0)$ (see Definition \ref{def.metric}). Then there exist $C=C(n,C_0,N_0)>0$ and $l=l(n,C_0,N_0)\in\N$ such that for all $a\in S(1,g)$ (see Definition \ref{def.symbol}), 
$$\|a^w\|_{{\mathcal L}(L^2(\R^n))}\leq C\|a\|_{S(1,g)}^{(l)}.$$
\end{thm}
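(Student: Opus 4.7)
The plan is to reduce to the constant-metric case (Proposition \ref{prop.const}) via a partition of unity adapted to $g$, and then sum the resulting operators by the Cotlar--Stein almost-orthogonality lemma. Fix $r\in(0,C_0^{-1/2}]$ (its precise value will depend only on $C_0,N_0,n$). By Proposition \ref{prop.parti.a} applied with the weight $m\equiv1$, write
\[
a(X)=\int_{\R^{2n}} a_Y(X)\,|g_Y|^{1/2}\,dY,
\]
with $\mathrm{supp}\,a_Y\subset U_{Y,r}$ and $\|a_Y\|_{S(1,g_Y)}^{(k)}\leq C(k,n,C_0)\|a\|_{S(1,g)}^{(k)}$ for every $k$. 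Correspondingly $a^w=\int A_Y\,|g_Y|^{1/2}\,dY$ with $A_Y=a_Y^w$. The frozen metric $g_Y$ is constant, satisfies $g_Y\leq g_Y^\sigma$ by (\ref{eq.uncertain}), and Proposition \ref{prop.const} gives the uniform bound $\|A_Y\|_{\mathcal{L}(L^2)}\leq C(n)\|a\|_{S(1,g)}^{(2n+1)}$.

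The next step is to estimate the compositions $A_Y A_Z^*$ and $A_Y^* A_Z$ for the Cotlar--Stein lemma (in its continuous form with measure $|g_Y|^{1/2}dY$). Since $A_Y^*=(\bar{a}_Y)^w$, both are quantizations of the symbols $a_Y\sharp\bar{a}_Z$ and $\bar{a}_Y\sharp a_Z$. I would work on the symbol $b_{Y,Z}=a_Y\sharp\bar{a}_Z$, viewed as a symbol for the constant metric $G_{Y,Z}=\tfrac12(g_Y+g_Z)$ (or any convenient constant majorant of $g_Y\wedge g_Z$ compatible with the uncertainty principle). The key claim is the quantitative estimate
\[
\|b_{Y,Z}\|_{S(1,G_{Y,Z})}^{(2n+1)}\leq C_N(n,C_0,N_0)\,\bigl(\|a\|_{S(1,g)}^{(L)}\bigr)^2\,\delta_r(Y,Z)^{-N},
\]
valid for every $N\in\N$, with $L=L(N,n,C_0,N_0)$. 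Applying Proposition \ref{prop.const} to $b_{Y,Z}$ then yields $\|A_YA_Z^*\|_{\mathcal{L}(L^2)}\leq C_N\delta_r(Y,Z)^{-N}\bigl(\|a\|_{S(1,g)}^{(L)}\bigr)^2$, and the analogous bound for $A_Y^*A_Z$.

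To establish this decay estimate I would start from the integral representation (\ref{eq.compo1}),
\[
b_{Y,Z}(X)=2^{2n}\iint a_Y(U)\bar{a}_Z(V)\,e^{-4i\pi[X-U,X-V]}\,dU\,dV,
\]
and perform integration by parts with respect to the first-order differential operator whose symbol is the phase gradient; the support constraints $U\in U_{Y,r}$ and $V\in U_{Z,r}$ force the phase to oscillate at a rate controlled from below by $(g_Y^\sigma\wedge g_Z^\sigma)(U-V)\geq\delta_r(Y,Z)-1$. Comparing $g_Y,g_Z$ on the supports, using the slowness (\ref{eq.slow}) and temperance (\ref{eq.temper}) to pass between frozen metrics (precisely the argument already used in the proof of Lemma \ref{lem.distance}, see (\ref{eq.dis02}) and (\ref{eq.dis03})), each integration by parts produces a factor proportional to $\delta_r(Y,Z)^{-1}$ at the cost of extra derivatives of $a_Y,\bar{a}_Z$, which is admissible since $a$ is smooth. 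Iterating yields arbitrarily fast decay in $\delta_r(Y,Z)$ with constants polynomial in $C_0$, the exponent growing linearly with $N_0$.

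Once the decay estimate is in hand, Lemma \ref{lem.distance} provides a fixed $N_1=N_1(n,C_0,N_0)$ with $\sup_Y\int\delta_r(Y,Z)^{-N_1}|g_Z|^{1/2}dZ\leq C(n,C_0,N_0)$. Taking $N=2N_1$ in the decay estimate makes
\[
\sup_Y\int\|A_YA_Z^*\|^{1/2}|g_Z|^{1/2}dZ+\sup_Z\int\|A_Y^*A_Z\|^{1/2}|g_Y|^{1/2}dY\leq C'\,\|a\|_{S(1,g)}^{(L)},
\]
and the continuous Cotlar--Stein lemma delivers the desired bound $\|a^w\|_{\mathcal{L}(L^2)}\leq C\|a\|_{S(1,g)}^{(l)}$ with $l=l(n,C_0,N_0)$. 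The main obstacle is the symbol composition estimate: tracking through the oscillatory integral and the repeated changes of reference point so that the constants depend only on $(n,C_0,N_0)$ and the exponent $l$ is explicit. Everything else (the partition of unity, the constant-metric bound, and the summation step) is routine given the tools assembled above.
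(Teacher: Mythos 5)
Your plan reproduces the paper's proof almost exactly: partition of unity via Proposition~\ref{prop.parti.a}, the constant-metric bound of Proposition~\ref{prop.const} applied to each piece, and then Cotlar--Stein summation using the decay in $\delta_r(Y,Z)$ together with the integrability Lemma~\ref{lem.distance}. The only point of divergence is how you obtain the key composition estimate (the paper's Lemma~\ref{lem.1}): the paper cites the biconfinement theorem (Theorem~\ref{thm.biconf}, quoted from Lerner), whereas you propose to re-derive the decay directly by integration by parts in the oscillatory integral~(\ref{eq.compo1}). That is indeed how the biconfinement estimate is proved, so this is the same route by another name, but your sketch of the integration-by-parts step is imprecise at exactly the delicate point: the phase $[X-U,X-V]$ has gradient in $(U,V)$ controlled by $X-V$ and $X-U$ (in the appropriate dual metrics), not directly by $U-V$, so what one proves is pointwise decay of $(a_Y\sharp\bar a_Z)(X)$ away from $U_{Y,r}$ and $U_{Z,r}$, exactly as in~(\ref{eq.biconf}), and only then does the triangle inequality $(g_Y^\sigma\wedge g_Z^\sigma)(X-U_{Y,r})+(g_Y^\sigma\wedge g_Z^\sigma)(X-U_{Z,r})\ge\tfrac12(g_Y^\sigma\wedge g_Z^\sigma)(U_{Y,r}-U_{Z,r})$ convert this into decay in $\delta_r(Y,Z)$. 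If you carry out the integration by parts with that correction, you will have re-derived the biconfinement estimate the paper invokes, and the rest of your argument matches the paper.
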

\begin{proof}
Using the partition in Proposition \ref{prop.parti.a}, we write
$$a^w=\int_{\R^{2n}}a_Y^w|g_Y|^{1/2}dY,$$
where $a_Y$ is supported in $U_{Y,r}$ and satisfies \eqref{eq.parti.a}.
By Proposition \ref{prop.const}, we have $\sup_Y\|a_Y^w\|_{{\mathcal L}(L^2(\R^n))}\leq C(r,n,C_0,N_0)\|a\|_{S(1,g)}^{(2n+1)}<+\infty$. The following lemma is useful.

\begin{lem}[Cotlar]\label{lem.cotlar}
Let $H$ be a Hilbert space and $(\Omega,{\mathcal A},\nu)$ a measured space such that $\nu$ is a $\sigma$-finite positive measure. Let $(A_y)_{y\in\Omega}$ be a measurable family of bounded operators on $H$ such that 
$$\sup_{y\in\Omega}\int_\Omega \|A_y^\ast A_z\|_{{\mathcal L}(H)}^{1/2}d\nu(z)\leq M,\quad\sup_{y\in\Omega}\int_\Omega \|A_y A_z^\ast\|_{{\mathcal L}(H)}^{1/2}d\nu(z)\leq M.$$
Then for all $u\in H$, we have
$$\iint_{\Omega\times\Omega}|\langle A_yu,A_zu\rangle_H|d\nu(y)d\nu(z)\leq M^2\|u\|_H^2,$$
which implies the strong convergence of $A=\int_\Omega A_yd\nu(y)$ and $\|A\|_{{\mathcal L}(H)}\leq M$.
\end{lem}

In order to apply Cotlar's lemma, we should estimate $\|\bar a_Y^wa_Z^w\|_{{\mathcal L}(L^2(\R^n))}$, i.e. a semi-norm of $\bar a_Y\sharp a_Z$ in $S(1,g_Y+g_Z)$. Indeed, the following estimate holds.

\begin{lem}\label{lem.1}
Let $g$, $a_Y$ be as above. For any $k, N\in\N$, there exist $C=C(k,N,n)>0$, $l=l(k,N,n)\in\N$ such that
\begin{equation}\label{eqlem.1}
\|\bar a_Y\sharp a_Z\|_{S(1,g_Y+g_Z)}^{(k)}\leq C \|\bar a_Y\|_{S(1,g_Y)}^{(l)}\|a_Z\|_{S(1,g_Z)}^{(l)}\delta_r(Y,Z)^{-N}.
\end{equation}
\end{lem}

We use some biconfinement estimates, which can be found in \cite[section 2.3]{lerner}, to prove Lemma \ref{lem.1}.

\begin{defi}[Confined symbols]\label{def.conf}
Let $g$ be a positive-definite quadratic form on $\R^{2n}$ such that $g\leq g^\sigma$. Let $a$ be a smooth function on $\R^{2n}$ and $U\subset \R^{2n}$. We say that $a$ is $g$-confined in $U$, if for all $k,N\in\N$, there exits $C_{k,N}>0$ such that for all $X,T\in\R^{2n}$,
$$|a^{(k)}(X)T^k|\leq C_{k,N}g(T)^{k/2}\big(1+g^\sigma(X-U)\big)^{-N/2}.$$
We denote
\begin{equation}\label{def.conf.1}
\|a\|_{g,U}^{(k,N)}=\sup_{X,T\in\R^{2n},g(T)=1}|a^{(k)}(X)T^k|\big(1+g^\sigma(X-U)\big)^{N/2},
\end{equation}
\begin{equation}\label{def.conf.2}
\text{and}\qquad |\!|\!| a|\!|\!|_{g,U}^{(l)}=\max_{k\leq l}\|a\|_{g,U}^{(k,l)}.
\end{equation}
\end{defi}

\begin{thm}[{\cite[Theorem 2.3.2]{lerner}}, biconfinement estimate]\label{thm.biconf}
Let $g_1,g_2$ be two positive-definite quadratic forms on $\R^{2n}$ such that $g_j\leq g_j^\sigma$. Let $a_j,j=1,2$ be $g_j$-confined in $U_j$, a $g_j$-ball of radius $\leq1$. Then for all $k,N\in\N$, for all $X,T\in\R^{2n}$,
\begin{equation}\label{eq.biconf}
|(a_1\sharp a_2)^{(k)}(X)T^k|\leq A_{k,N}(g_1+g_2)(T)^{k/2}\Big(1+(g_1^\sigma\wedge g_2^\sigma)(X-U_1)+(g_1^\sigma\wedge g_2^\sigma)(X-U_2)\Big)^{-N/2},
\end{equation}
with $A_{k,N}=\gamma(k,N,n)|\!|\!| a_1|\!|\!|_{g_1,U_1}^{(l)}|\!|\!|a_2|\!|\!|_{g_2,U_2}^{(l)}$, $l=2n+1+k+N$.
\end{thm}

Now we begin the proof of Lemma \ref{lem.1}.
\begin{proof}[Proof of Lemma \ref{lem.1}]
The symbol $a_Y$ is $g_Y$-confined in $U_{Y,r}$, since $a_Y$ is supported in the $g_Y$-ball $U_{Y,r}$. Moreover, we have
$$\forall k,N\in\N,\qquad \|a_Y\|_{g_Y,U_{Y,r}}^{(k,N)}=\sup_{\substack{X\in U_{Y,r},T\in\R^{2n}\\ g_Y(T)=1}}|a^{(k)}(X)T^k|,$$
$$\forall l\in\N,\qquad|\!|\!| a_Y|\!|\!|_{g_Y,U_{Y,r}}^{(l)}=\max_{k\leq l}\|a_Y\|_{g_Y,U_{Y,r}}^{(k,l)}=\|a_Y\|_{S(1,g_Y)}^{(l)}.$$
Applying \eqref{eq.biconf} to $\bar a_Y\sharp a_Z$ and using the triangular inequality
$$(g_Y^\sigma\wedge g_Z^\sigma)(X-U_{Y,r})+(g_Y^\sigma\wedge g_Z^\sigma)(X-U_{Z,r})\geq\frac{1}{2}(g_Y^\sigma\wedge g_Z^\sigma)(U_{Y,r}-U_{Z,r}),$$
we get 
\begin{align*}
|(\bar a_Y\sharp a_Z)^{(k)}(X)T^k|&\leq \gamma(k,N,n)\|\bar a_Y\|_{S(1,g_Y)}^{(l)}\|a_Z\|_{S(1,g_Z)}^{(l)}(g_Y+g_Z)(T)^{k/2}\\
&\quad\times\big(1+\frac{1}{2}(g_Y^\sigma\wedge g_Z^\sigma)(U_{Y,r}-U_{Z,r})\big)^{-N/2}.
\end{align*}
Using the definition of the distance $\delta_r$, we complete the proof of Lemma \ref{lem.1}.
\end{proof}

\noindent{\it End of the proof of Theorem \ref{thm.continuity}.}
Now by Proposition \ref{prop.const}, Lemma \ref{lem.1} and the estimate \eqref{eq.parti.a}, we obtain that for any $N>0$, there exists $l=l(N,n)\in\N$ such that
\begin{align*}
\|\bar a_Y^wa_Z^w\|_{{\mathcal L}(L^2(\R^n))}&\leq C(n)\|\bar a_Y^wa_Z^w\|_{S(1,g_Y+g_Z)}^{(2n+1)}\\
&\leq C(N,n) \|\bar a_Y\|_{S(1,g_Y)}^{(l)}\|a_Z\|_{S(1,g_Z)}^{(l)}\delta_r(Y,Z)^{-N}\\
&\leq C(N,n,C_0)\big(\|a\|_{S(1,g)}^{(l)}\big)^2\delta_r(Y,Z)^{-N}
\end{align*}
The same inequality holds for $a_Y\sharp\bar a_Z$. Choose $N=2N_1$, where $N_1$ is given in \eqref{eq.dis01}, so that 
$$\max\{\|\bar a_Y^wa_Z^w\|_{{\mathcal L}(L^2(\R^n))}^{1/2},\|a_Y^w\bar a_Z^w\|_{{\mathcal L}(L^2(\R^n))}^{1/2}\}\leq C\|a\|_{S(1,g)}^{(l)}\delta_r(Y,Z)^{-N_1},$$
where $C=C(n,C_0,N_1)>0$, $l=l(n,N_1)\in\N$.
Then together with Lemma \ref{lem.distance}, the assumptions of Cotlar's lemma are fulfilled with $M=C\|a\|_{S(1,g)}^{(l)}$, and this completes the proof of Theorem \ref{thm.continuity}.
\end{proof}

\section{Fefferman-Phong inequality}
In this section, we prove that the constant in the Fefferman-Phong inequality depends only on the structure constants of the metric and a fixed semi-norm of the symbol.
\begin{thm}[Fefferman-Phong inequality]\label{thm.fp}
Let $g$ be an admissible metric on $\R^{2n}$ with structure constants $(C_0,N_0)$ (see Definition \ref{def.metric}). Let $a$ be  a non-negative symbol in $S(\lambda_g^2,g)$ (see Definition \ref{def.symbol} and (\ref{def.lambda})). Then the operator $a^w$ on $L^2(\R^n)$ is semi-bounded from below. More precisely, there exist $l=l(n,C_0,N_0)\in\N$, $C=C(n,C_0,N_0)>0$ such that
\begin{equation}
a^w+C\|a\|_{S(\lambda_g^2,g)}^{(l)}\geq0.
\end{equation}
\end{thm}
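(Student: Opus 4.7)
The plan is to follow the classical Bony--Lerner proof of Fefferman--Phong in the Weyl--H\"ormander calculus, carefully tracking every constant in order to verify that it depends only on $n$ and on the structure constants $(C_0,N_0)$. The architecture closely parallels that of Theorem \ref{thm.continuity}: first prove the inequality in the constant-metric case with a purely dimensional constant, then glue the local pieces together via the partition of unity and a controlled remainder estimate.

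First I would establish the constant-metric analogue of Proposition \ref{prop.const}: for a positive-definite quadratic form $g_0$ on $\R^{2n}$ with $g_0\leq g_0^\sigma$ and $a\geq 0$ in $S(\lambda_{g_0}^2,g_0)$, one has
\[
a^w+C(n)\|a\|_{S(\lambda_{g_0}^2,g_0)}^{(l(n))}\geq 0.
\]
As in Proposition \ref{prop.const}, this reduces via Lemma 4.4.25 of \cite{lerner} to symplectic coordinates in which $g_0=\sum_j\lambda_j^{-1}(|dx_j|^2+|d\xi_j|^2)$ with $\lambda_j\geq 1$; a suitable rescaling then brings the question to the classical Fefferman--Phong theorem on $\R^{2n}$, whose constant depends only on $n$. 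Next, I would use Proposition \ref{prop.parti.a} to write $a=\int a_Y|g_Y|^{1/2}dY$ with $a_Y$ supported in $U_{Y,r}$ and $\|a_Y\|_{S(\lambda_g(Y)^2,g_Y)}^{(k)}\leq C(k,n,C_0,N_0)\|a\|_{S(\lambda_g^2,g)}^{(k)}$.

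Since the $a_Y$'s need not be non-negative, the constant-metric Fefferman--Phong cannot be applied to each piece individually. The remedy, following Bony and Lerner, is either to pass through a $g$-adapted Wick quantization $a^{\mathrm{Wick}}$ (which satisfies $a^{\mathrm{Wick}}\geq 0$ whenever $a\geq 0$, and for which the difference $a^w-a^{\mathrm{Wick}}$ has Weyl symbol in $S(1,g)$ with semi-norms controlled by $\|a\|_{S(\lambda_g^2,g)}^{(l)}$), or to perform a local sum-of-squares decomposition $a_Y=\sum_j b_{Y,j}^2+r_Y$ with $g_Y$-confined squares $b_{Y,j}$ and a controlled remainder $r_Y\in S(1,g_Y)$. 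In either route the residual piece is absorbed by Theorem \ref{thm.continuity}, the squares contribute non-negatively, and the global reassembly of the cross-terms is handled by the biconfinement estimate (Theorem \ref{thm.biconf}) together with the integrability of $\delta_r$ (Lemma \ref{lem.distance}), exactly as in the proof of Theorem \ref{thm.continuity}. The main obstacle is ensuring that this sum-of-squares construction (or equivalently the asymptotic expansion of $a^w-a^{\mathrm{Wick}}$) comes with a universal bound on its depth and on each intermediate constant, so that the semi-norm order $l$ at which one stops depends only on $n$ while every estimate produced along the way depends only on $(n,C_0,N_0)$. It is precisely here that the hypothesis $a\in S(\lambda_g^2,g)$ is used crucially, rather than the weaker $a\in S(\lambda_g,g)$ which would yield only the G{\aa}rding inequality via a pure Wick argument.
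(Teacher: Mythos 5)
Your proposal has two genuine problems, and the second one is the crux of the whole theorem.

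First, the claim that ``the $a_Y$'s need not be non-negative'' is incorrect, and the detour it motivates (Wick quantization or local sum-of-squares) is not needed. Look again at the construction in Theorem~\ref{thm.partition}: $\varphi_Y(X)=\omega_Y(X)\omega(X,r)^{-1}$ where $\omega_Y(X)=\chi_0\bigl(r^{-2}g_Y(X-Y)\bigr)$ is built from a cutoff $\chi_0$ valued in $[0,1]$, and $\omega(X,r)=\int\omega_Y(X)|g_Y|^{1/2}dY>0$. So $\varphi_Y\geq 0$ and hence $a_Y=\varphi_Y a\geq 0$ whenever $a\geq 0$. The constant-metric Fefferman--Phong (Theorem~\ref{thm.fp.cons}) \emph{can} be applied to each piece, giving $a_Y^w+C_3\|a\|_{S(\lambda_g^2,g)}^{(l(n))}\geq 0$ uniformly in $Y$, with $C_3=C_3(n,C_0,N_0)$.

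Second, and more importantly, even with each $a_Y^w$ bounded below you have said nothing about how to pass to a lower bound on $a^w=\int a_Y^w\,|g_Y|^{1/2}dY$. This is the real difficulty: $\int|g_Y|^{1/2}dY=\infty$, so you cannot integrate the pointwise bound, and Cotlar's lemma is of no direct use on the family $(a_Y^w)$ because $\|a_Y^w\|_{\mathcal{L}(L^2)}$ grows like $\lambda_g(Y)^2$. The strategy you gesture at --- ``global reassembly of the cross-terms handled by the biconfinement estimate exactly as in the proof of Theorem~\ref{thm.continuity}'' --- does not work here: Theorem~\ref{thm.continuity} controls a \emph{norm}, not a one-sided bound, and requires the pieces to be uniformly $L^2$-bounded. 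The paper's essential device, absent from your proposal, is to write $\psi_Y\sharp a_Y\sharp\psi_Y=a_Y+r_Y$ with $\psi_Y$ a real-valued cutoff equal to $1$ on $U_{Y,r}$ and supported in $U_{Y,2r}$. Since $\psi_Y$ is real, $\psi_Y^w$ is self-adjoint, and the uniform local bound $a_Y^w\geq-C_3\|a\|$ yields $\psi_Y^wa_Y^w\psi_Y^w\geq-C_3\|a\|\,\psi_Y^w\psi_Y^w$. Both families $\bigl(\psi_Y^w\psi_Y^w\bigr)$ and $\bigl(r_Y^w\bigr)$ \emph{are} uniformly bounded (in $S(1,g_Y)$ up to constants controlled by $(n,C_0,N_0)$) and almost-orthogonal via the biconfinement estimates and $\delta_r$-integrability, so Cotlar applies to $\int\psi_Y^w\psi_Y^w|g_Y|^{1/2}dY$ and $\int r_Y^w|g_Y|^{1/2}dY$ (Lemmas~\ref{lem.fp.rest} and~\ref{lem.fp.rest2}). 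Without this conjugation trick your proof does not close, regardless of whether you use Wick, sum-of-squares, or (correctly) just Theorem~\ref{thm.fp.cons} on each $a_Y$.
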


\subsection{The constant metric case}
For the constant metric case, we use the results of Sj\"ostrand and refer the readers to \cite[page 116]{lerner} for the detailed proof.

Let $1=\sum_{j\in \Z^{2n}}\chi_0(X-j)$ be a partition of unity, $\chi_0\in C_c^\infty(\R^{2n})$. Denote $\chi_j(X)=\chi_0(X-j)$.
\begin{prop}[{\cite[Proposition 2.5.6]{lerner}}]
Suppose $a\in {\mathcal S}(\R^{2n})$. We say that $a$ belongs to the class ${\mathcal A}$ if $\omega_a\in L^1(\R^{2n})$, with $\omega_a(\Xi)=\sup_{j\in\Z^{2n}}|{\mathcal F}(\chi_ja)(\Xi)|$, where ${\mathcal F}$ is the Fourier transform. We have
$$S_{0,0}^0\subset S_{0,0;2n+1}\subset {\mathcal A}\subset C^0(\R^{2n})\cap L^\infty(\R^{2n}),$$
where $S^0_{0,0}=C_b^\infty(\R^{2n})$ is the space of $C^\infty$ functions on $\R^{2n}$ which are bounded as well as all their derivatives, $S_{0,0;2n+1}$ is the set of functions defined on $\R^{2n}$ such that $|(\partial_\xi^\alpha\partial_x^\beta a)(x,\xi)|\leq C_{\alpha\beta}$ for $|\alpha|+|\beta|\leq 2n+1$. ${\mathcal A}$ is a Banach algebra for the multiplication with the norm $\|a\|_{{\mathcal A}}=\|\omega_a\|_{L^1(\R^{2n})}$.
\end{prop}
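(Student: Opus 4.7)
The statement is a chain of three inclusions together with the Banach algebra structure on $\mathcal{A}$; I would verify them in that order. The first inclusion $S^0_{0,0}\subset S_{0,0;2n+1}$ is immediate, since by definition a symbol with all derivatives bounded has in particular its derivatives of order $\le 2n+1$ bounded. For $S_{0,0;2n+1}\subset\mathcal{A}$, I would fix $a\in S_{0,0;2n+1}$ and integrate by parts: for any multi-index $\gamma$ with $|\gamma|\le 2n+1$, Leibniz yields $\|\partial^\gamma(\chi_j a)\|_{L^1}\le C_\gamma$ uniformly in $j\in\Z^{2n}$ (the constant depending only on the support of $\chi_0$, on finitely many semi-norms of $\chi_0$, and on the $S_{0,0;2n+1}$ semi-norms of $a$), whence $|\Xi^\gamma\mathcal{F}(\chi_j a)(\Xi)|\le C_\gamma$ uniformly in $j$. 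Summing over $|\gamma|\le 2n+1$ gives $|\mathcal{F}(\chi_j a)(\Xi)|\le C(1+|\Xi|)^{-(2n+1)}$, and since $2n+1>\dim\R^{2n}=2n$ this bound is integrable; taking $\sup_j$ preserves it pointwise, so $\omega_a\in L^1$ with norm controlled by finitely many derivatives of $a$.

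For the third inclusion, let $a\in\mathcal{A}$. Each $\mathcal{F}(\chi_j a)$ is pointwise dominated by $\omega_a\in L^1$, hence Fourier inversion gives $\chi_j a\in C_0(\R^{2n})$ with $\|\chi_j a\|_{L^\infty}\le\|\omega_a\|_{L^1}=\|a\|_{\mathcal{A}}$. Since the partition is locally finite (a number $N_0$ of the $\chi_j$, depending solely on the support of $\chi_0$, is nonzero at any given point), the sum $a=\sum_j\chi_j a$ converges pointwise, identifies $a$ with a continuous function, and gives $\|a\|_{L^\infty}\le N_0\|a\|_{\mathcal{A}}$.

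The Banach algebra property splits into submultiplicativity and completeness. For submultiplicativity I would expand, for $a,b\in\mathcal{A}$,
\[
\chi_j(ab)=\sum_{k:\,\mathrm{supp}\,\chi_j\cap\mathrm{supp}\,\chi_k\ne\emptyset}(\chi_j a)(\chi_k b),
\]
a sum with at most $N_0$ nonzero terms. The Fourier transform converts pointwise products into convolutions, so $|\mathcal{F}(\chi_j ab)(\Xi)|\le N_0(\omega_a*\omega_b)(\Xi)$; taking $\sup_j$, integrating in $\Xi$, and applying Young's inequality yields $\|ab\|_{\mathcal{A}}\le N_0\|a\|_{\mathcal{A}}\|b\|_{\mathcal{A}}$, which is submultiplicativity up to an absolute constant (absorbable into an equivalent norm). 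Completeness follows from the third inclusion: an $\mathcal{A}$-Cauchy sequence $(a_n)$ is uniformly Cauchy and so has a continuous limit $a$, while cell by cell $(\mathcal{F}(\chi_j a_n))$ is $L^1$-Cauchy with pointwise limit $\mathcal{F}(\chi_j a)$; Fatou applied to $\omega_{a_n-a}$ then shows $a\in\mathcal{A}$ and $a_n\to a$ in $\mathcal{A}$. The only delicate point is keeping the geometric constant $N_0$ uniform in the transition from cell-wise to global estimates, which is precisely what the locally finite character of $(\chi_j)_{j\in\Z^{2n}}$ provides.
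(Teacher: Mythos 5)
Your proof is correct. Note, however, that the paper itself offers no proof of this proposition: it is quoted verbatim from Lerner's book (Proposition 2.5.6 there) and the reader is referred to that reference, so there is nothing internal to compare against; your argument (integration by parts to get the $(1+|\Xi|)^{-(2n+1)}$ decay for the middle inclusion, domination of each $\mathcal{F}(\chi_j a)$ by $\omega_a$ plus local finiteness for the last one, convolution and Young for submultiplicativity, and Fatou for completeness) is the standard one and matches the cited source. One small point worth flagging: as printed, the hypothesis ``$a\in\mathcal{S}(\R^{2n})$'' would make the inclusion $\mathcal{A}\subset C^0\cap L^\infty$ vacuous; the intended setting is $a\in\mathcal{S}'(\R^{2n})$, and your argument is exactly the one needed for that reading.
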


\begin{thm}[{\cite[Theorem 2.5.10]{lerner}}]\label{thm.fp.cons}
For all non-negative function $a$ defined on $\R^{2n}$ satisfying $a^{(4)}\in{\mathcal A}$, then the operator $a^w$ is semi-bounded from below. More precisely, 
$$a^w+C_n\|a^{(4)}\|_{{\mathcal A}}\geq0,$$
where $C_n$ depends only on the dimension $n$.
\end{thm}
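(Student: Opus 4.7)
The plan is to follow Sj\"ostrand's variant of the Fefferman--Phong inequality in the constant-coefficient setting, combining a unit-scale partition of unity on $\R^{2n}$ with a local sum-of-squares decomposition of $a$, and exploiting the fact that ${\mathcal A}$ is a Banach algebra whose Weyl quantization gives $L^2$-bounded operators with norm $\leq C_n\|\cdot\|_{\mathcal A}$ (a statement of Sj\"ostrand preceding the theorem).

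First I would fix a partition $1=\sum_{j\in\Z^{2n}}\chi_j^2$ with $\chi_j(X)=\chi_0(X-j)$, $\chi_0\in C_c^\infty(\R^{2n})$; each $\chi_j$ then lies in $S_{0,0}^0\subset{\mathcal A}$ with a norm independent of $j$. On each unit cube I would apply the classical Glaeser-type sum-of-squares lemma for non-negative $C^4$ functions: there exist finitely many smooth functions $b_{j,k}$, $k=1,\dots,K_n$, supported in a neighbourhood of the cube, such that on $\text{supp}\,\chi_j$
\[a(X)=\sum_{k=1}^{K_n}b_{j,k}(X)^2+r_j(X),\]
with $\|r_j\|_{C^2}$ and each $\|b_{j,k}\|_{C^2}^2$ bounded by $C_n\|a^{(4)}\|_{L^\infty}\leq C_n\|a^{(4)}\|_{\mathcal A}$. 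The derivative control on $b_{j,k}$ uses the iterated Glaeser inequality $|\nabla f|^2\leq 2f\,\|f''\|_\infty$.

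Second, setting $c_{j,k}=\chi_j b_{j,k}$ and using $\sum_j\chi_j^2\equiv 1$, I would rewrite $a=\sum_{j,k}c_{j,k}^2+R_0$ with $R_0=\sum_j\chi_j^2 r_j$, and invoke the Weyl composition rule $(c_{j,k}^w)^*c_{j,k}^w=(c_{j,k}^2)^w+\rho_{j,k}^w$, where $\rho_{j,k}$ is a finite sum of products of derivatives of $c_{j,k}$ of total order $\geq 2$ and is therefore controlled in $L^\infty$ by $C_n\|a^{(4)}\|_{L^\infty}$. Because the operators $(c_{j,k}^w)^*c_{j,k}^w$ are manifestly non-negative, summing yields
\[a^w=\sum_{j,k}(c_{j,k}^w)^*c_{j,k}^w+R^w\geq R^w,\qquad R=R_0-\sum_{j,k}\rho_{j,k},\]
and the $L^2$-boundedness of Weyl operators with ${\mathcal A}$-symbol furnishes $a^w\geq -C_n\|R\|_{\mathcal A}\geq -C_n\|a^{(4)}\|_{\mathcal A}$.

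The main obstacle is the local sum-of-squares construction with \emph{quantitative} derivative bounds keyed to $\|a^{(4)}\|$. On each cube one must split two regimes: where $a$ is ``large'' (say $a\gtrsim\|a^{(4)}\|_\infty^{1/2}$) one can essentially take $b_1=\sqrt{a}$ and verify via Glaeser that $\sqrt{a}$ has bounded derivatives up to order $2$; where $a$ is ``small'' one Taylor-expands $a$ at a near-minimum and rewrites the resulting degree-$\leq 4$ polynomial as an explicit sum of squares, the Taylor remainder being already of size $\|a^{(4)}\|_\infty$. A secondary difficulty is that $R_0$ and $\sum_{j,k}\rho_{j,k}$ must converge in the ${\mathcal A}$-norm rather than just in $L^\infty$; this uses the locally finite overlap of the partition together with the embedding $S_{0,0;2n+1}\hookrightarrow{\mathcal A}$ recalled in Proposition 2.5.6 above.
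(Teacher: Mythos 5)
The paper does not actually prove this statement: it is quoted from Lerner's book (Theorem 2.5.10, a result of Lerner--Morimoto), and the text explicitly refers the reader there. The cited proof runs an induction on the dimension via a Calder\'on--Zygmund decomposition carried out entirely inside the Wiener algebra ${\mathcal A}$. Your sketch reproduces the general shape of the Fefferman--Phong strategy, but the step you present as a ``classical Glaeser-type sum-of-squares lemma'' on each unit cube is not a classical lemma --- it is the entire content of the theorem, and your version of it fails. A fixed unit-scale grid cannot work: $\|a^{(4)}\|_{L^\infty}$ gives no control whatsoever on $a''$ (take $a(X)=NX_1^2$, which has $a^{(4)}=0$ and arbitrarily large Hessian), so on a given unit cube you may be in neither of your two regimes, and the correct quantitative conclusion is only $\|b_k''\|_{L^\infty}\lesssim\|a^{(4)}\|_{L^\infty}^{1/2}$, not a bound on $\|b_k\|_{C^2}$. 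The genuine proof uses boxes whose sizes are chosen by a stopping-time condition adapted to $a$ and an induction on the dimension (factoring out one variable when a second derivative is large). Your fallback in the ``small'' regime --- Taylor-expand and write the degree-$4$ polynomial as an explicit sum of squares --- also fails: by Hilbert's classification there exist non-negative quartics in four or more variables that are not sums of squares of polynomials, and the quartic Taylor polynomial of a non-negative function need not even be non-negative.

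There is a second, independent gap at the quantization step. Even granting $a=\sum_k b_k^2$ with $\|b_k''\|_{L^\infty}\lesssim\|a^{(4)}\|_{L^\infty}^{1/2}$, the error $\rho_k=\bar b_k\sharp b_k-b_k^2$ is then controlled only in $L^\infty$, and an $L^\infty$ symbol does not yield an $L^2$-bounded Weyl operator. This is precisely why the hypothesis and the conclusion are phrased in the algebra ${\mathcal A}$: the point of the Lerner--Morimoto argument is to run the whole induction so that the factors and all remainders remain in ${\mathcal A}$ with controlled norm, after which $\|\rho^w\|_{{\mathcal L}(L^2)}\le C_n\|\rho\|_{{\mathcal A}}$ closes the estimate. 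Your appeal to the embedding $S_{0,0;2n+1}\subset{\mathcal A}$ does not help here, because the factors produced by any sum-of-squares construction are in general only $C^{1,1}$ --- nowhere near having $2n+1$ bounded derivatives --- so placing the remainders in ${\mathcal A}$ requires the dedicated argument, not the embedding.
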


\subsection{Proof of Theorem \ref{thm.fp}}
We shall use the partition of unity $(\varphi_Y)_{Y\in\R^{2n}}$ given in Theorem \ref{thm.partition}. Let $(\psi_Y)_{Y\in\R^{2n}}$ be a family of real-valued functions supported in $U_{Y,2r}$, equal to 1 on $U_{Y,r}$ and
\begin{equation}\label{psi}
\sup_{Y\in\R^{2n}}\|\psi_Y\|_{S(1,g)}^{(k)}\leq C(k,r,C_0).
\end{equation}
Indeed, with the same notations as in the proof of Theorem \ref{thm.partition}, the function $\psi_Y(X)=\chi_0\big(\frac{1}{2}r^{-2}g_Y(X-Y)\big)$ satisfies the requirements. Then with $a_Y=\varphi_Ya$, we write
\begin{equation}\label{eq.fp.1}
\psi_Y\sharp a_Y\sharp\psi_Y=a_Y+r_Y.
\end{equation}

\begin{lem}[Estimate for $r_Y$]\label{lem.fp.rest}
For all $k,N\in\N$, there exist $C=C(k,N,C_0)>0$, $l=l(k,N,C_0)\in\N$ such that for all $X\in\R^{2n}$, $T\in\R^{2n}$ with $g_Y(T)\leq1$,
\begin{equation}\label{eq.rest1}
|r_Y^{(k)}(X)T^k|\leq C\|a_Y\|_{S(\lambda_g(Y)^2,g_Y)}^{(l)}\big(1+g_Y^\sigma(X-U_{Y,2r})\big)^{-N}.
\end{equation}
Moreover, there exist $C_1=C_1(n,C_0,N_0)>0$, $l_1=l_1(n,C_0,N_0)\in\N$ such that 
\begin{equation}\label{eq.fp.2}
\big\|\int_{\R^{2n}}r_Y^w|g_Y|^{1/2}dY\big\|_{{\mathcal L}(L^2(\R^{n}))}\leq C_1\|a\|_{S(\lambda_g^2,g)}^{(l_1)},
\end{equation}
\end{lem}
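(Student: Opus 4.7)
The starting point is the pointwise identity $\psi_Y \cdot a_Y \cdot \psi_Y = a_Y$, valid because $\psi_Y \equiv 1$ on $U_{Y,r} \supset \operatorname{supp} a_Y$. Therefore
\[
r_Y \;=\; \psi_Y \sharp a_Y \sharp \psi_Y - \psi_Y \cdot a_Y \cdot \psi_Y
\]
captures only the non-commutative part of the triple Moyal product, and the plan is to realise this as a remainder in the asymptotic expansion \eqref{eq.compo.asy}--\eqref{eq.compo.asy3} of arbitrarily high order. The decisive observation is that each explicit term $w_k(\psi_Y,a_Y)$ with $k\geq 1$ vanishes pointwise: by \eqref{eq.compo.asy1} it is a linear combination of products $D_\xi^\alpha \partial_x^\beta \psi_Y \cdot D_\xi^\beta \partial_x^\alpha a_Y$ with $|\alpha|+|\beta|=k\geq 1$, so each of the two factors carries at least one derivative; but any derivative of $\psi_Y$ vanishes both on $U_{Y,r}$ (where $\psi_Y\equiv 1$) and outside $U_{Y,2r}$, while any derivative of $a_Y$ vanishes outside $U_{Y,r}$, so the two derivative factors have disjoint supports. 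The same applies to $w_k(a_Y,\psi_Y)$ for $k\geq1$, and composing the reduced identities $\psi_Y \sharp a_Y = a_Y + r_p(\psi_Y,a_Y)$ and $a_Y \sharp \psi_Y = a_Y + r_p(a_Y,\psi_Y)$ yields
\[
r_Y \;=\; r_p(a_Y,\psi_Y) + r_p(\psi_Y,a_Y) \sharp \psi_Y
\]
for every $p\geq 1$.

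To prove the pointwise bound \eqref{eq.rest1}, I would apply the biconfinement Theorem \ref{thm.biconf} (with both metrics equal to $g_Y$ and confinement balls inside $U_{Y,2r}$) in conjunction with the explicit integral form \eqref{eq.compo.asy3} of $R_p$. The operator $R_p$ produces $p$ factors of $\lambda_g(Y)^{-1}$ through the $p$-fold application of $(4i\pi)^{-1}[\partial_X,\partial_Y]$; choosing $p=2$ consumes exactly the two powers $\lambda_g(Y)^2$ carried by the weight of $a_Y$, so that each of the two summands of $r_Y$ satisfies a bound
\[
|r_Y^{(k)}(X)T^k| \;\leq\; C\, g_Y(T)^{k/2}\, \|a_Y\|_{S(\lambda_g(Y)^2,g_Y)}^{(l)}\, \bigl(1+g_Y^\sigma(X-U_{Y,2r})\bigr)^{-N},
\]
from which \eqref{eq.rest1} follows on restricting to $g_Y(T)\leq 1$.

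For the operator-norm bound \eqref{eq.fp.2}, I would apply Cotlar's Lemma \ref{lem.cotlar} to the family $A_Y = r_Y^w$ against the measure $|g_Y|^{1/2}\,dY$. Since by \eqref{eq.rest1} each $r_Y$ is $g_Y$-confined in $U_{Y,2r}$ with size $\|a_Y\|_{S(\lambda_g(Y)^2,g_Y)}^{(l)}$, a biconfinement computation in the spirit of Lemma \ref{lem.1} yields
\[
\|r_Y \sharp \bar r_Z\|_{S(1,g_Y+g_Z)}^{(2n+1)} \;\leq\; C\, \bigl(\|a\|_{S(\lambda_g^2,g)}^{(l')}\bigr)^2 \delta_r(Y,Z)^{-N}
\]
and analogously for $\bar r_Y \sharp r_Z$. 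Proposition \ref{prop.const} converts these into $\mathcal{L}(L^2)$-norm estimates; selecting $N = 2N_1$ with $N_1$ from Lemma \ref{lem.distance} then fulfils the Cotlar hypothesis via \eqref{eq.dis01}, and this yields \eqref{eq.fp.2}.

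The main technical obstacle is the second step: the biconfinement Theorem \ref{thm.biconf} alone delivers spatial decay but no improvement in the $\lambda_g$-weight, while the asymptotic expansion \eqref{eq.compo.asy} alone delivers the $\lambda_g^{-2}$ weight gain but no spatial decay. The two must be combined by applying biconfinement to $\psi_Y \sharp a_Y$ and then subtracting the leading term $\psi_Y a_Y = a_Y$, checking that the subtraction preserves confinement in $U_{Y,2r}$, and repeating the manoeuvre for the composition with $\psi_Y$ on the right---all while keeping every constant uniform in $Y$ and controlled by a single semi-norm of $a$ together with the metric-dependent semi-norms of $\psi_Y$.
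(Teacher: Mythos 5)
Your proposal follows essentially the same route as the paper: the identity $r_Y = r_2(a_Y,\psi_Y) + r_2(\psi_Y,a_Y)\sharp\psi_Y$ (the paper uses $p=2$, and you correctly note that it holds for any $p\geq 1$ because all $w_k$, $k\geq1$, vanish by the disjoint-support argument), then a confinement estimate to gain both spatial decay and a factor $\lambda_g(Y)^{-2}$, and finally a Cotlar-lemma argument against $\delta_{2r}(Y,Z)^{-N}$. The Cotlar step is handled exactly as in the paper.

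The one place you stop short is in what you label the ``main technical obstacle'': combining the spatial decay of Theorem~\ref{thm.biconf} with the $\lambda_g^{-p}$ gain of the expansion. The paper does not assemble this from scratch; it quotes a dedicated biconfinement estimate for remainders, Theorem~\ref{thm.biconf.r} (Lerner, Theorem 2.3.4), which states that if $a_1,a_2$ are $g_j$-confined in $g_j$-balls $U_j$ then
\[
|\big(r_p(a_1,a_2)\big)^{(k)}(X)T^k|\leq A_{k,N,p}(g_1+g_2)(T)^{k/2}\Lambda_{1,2}^{-p}\Big(1+(g_1^\sigma\wedge g_2^\sigma)(X-U_1)+(g_1^\sigma\wedge g_2^\sigma)(X-U_2)\Big)^{-N/2},
\]
with $\Lambda_{1,2}$ as in \eqref{Lambda}. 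With $g_1=g_2=g_Y$ one has $\Lambda_{1,2}=\lambda_g(Y)$, so applying this with $p=2$ to $r_2(\psi_Y,a_Y)$ produces the $\lambda_g(Y)^{-2}$ factor that cancels the weight of $a_Y$ and the confinement decay in a single stroke; the piece $r_2(\psi_Y,a_Y)\sharp\psi_Y$ is then treated by one further application of Theorem~\ref{thm.biconf}. So the obstacle you flag is real, but it is resolved by citing Theorem~\ref{thm.biconf.r} rather than by re-deriving the combined estimate from \eqref{eq.compo.asy3} and Theorem~\ref{thm.biconf}; if you do want to re-derive it, you would essentially be reproving Theorem~\ref{thm.biconf.r}, which takes some care (one has to push the derivatives coming from $[\partial_X,\partial_Y]^p$ through the biconfinement machinery while tracking the $g^\sigma$-decay), and that work is not present in your write-up. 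Apart from this missing citation, the argument is sound.
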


To prove Lemma \ref{lem.fp.rest}, we use the biconfinement estimate for the remainders, the proof of which can be found in \cite[section 2.3]{lerner}.

\begin{thm}[{\cite[Theorem 2.3.4]{lerner}}, biconfinement estimate]\label{thm.biconf.r}
Let $g_1,g_2$ be two positive-definite quadratic forms on $\R^{2n}$ with $g_j\leq g_j^\sigma$. Let $a_j,j=1,2$ be $g_j$-confined in $U_j$, a $g_j$-ball of radius $\leq1$. Recall (\ref{eq.compo.asy})
$$r_p(a_1,a_2)(X)=(a_1\sharp a_2)(X)-\sum_{0\leq k<p}\frac{1}{j!}\big(i\pi[D_{X_1},D_{X_2}]\big)^j\big(a_1(X_1)a_2(X_2)\big)_{|X_1=X_2=X}.$$ 
Then for all $k,l,p\in\N$, for all $X,T\in\R^{2n}$, we have
\begin{align}
\notag&|\big(r_p(a_1,a_2)\big)^{(k)}(X)T^k|\leq A_{k,N,p}(g_1+g_2)(T)^{k/2}\Lambda_{1,2}^{-p}\\
\label{eq.biconf.r}&\qquad\qquad\times\Big(1+(g_1^\sigma\wedge g_2^\sigma)(X-U_1)+(g_1^\sigma\wedge g_2^\sigma)(X-U_2)\Big)^{-N/2}
\end{align}
with $A_{k,N,p}=C(k,N,p,n)|\!|\!|a_1|\!|\!|_{g_1,U_1}^{(l)}|\!|\!|a_2|\!|\!|_{g_2,U_2}^{(l)}$, $l=2n+1+k+p+N$ and
\begin{equation}\label{Lambda}
\Lambda_{1,2}=\inf_{T\in\R^{2n},T\neq0}\Big(\frac{g_1^\sigma(T)}{g_2(T)}\Big)^{1/2}=\inf_{T\in\R^{2n},T\neq0}\Big(\frac{g_2^\sigma(T)}{g_1(T)}\Big)^{1/2}.
\end{equation}
\end{thm}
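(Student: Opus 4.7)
The strategy is to start from the explicit integral representation of the remainder given by \eqref{eq.compo.asy2}--\eqref{eq.compo.asy3}, namely
\begin{equation*}
r_p(a_1,a_2)(X)=\int_0^1\frac{(1-\theta)^{p-1}}{(p-1)!}\,e^{\frac{\theta}{4i\pi}[\p_Y,\p_Z]}\Bigl(\tfrac{1}{4i\pi}[\p_Y,\p_Z]\Bigr)^{p}\bigl(a_1(Y)a_2(Z)\bigr)\Big|_{Y=Z=X}d\theta,
\end{equation*}
and to rewrite the outer exponential as an oscillatory integral in the spirit of \eqref{eq.compo1} but with a $\theta$-dependent phase $-\theta^{-1}[X-Y,X-Z]$. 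After a preliminary linear symplectic change of coordinates simultaneously reducing $g_1$ and $g_2$ to a normal form---using that two positive-definite quadratic forms on a symplectic space admit a simultaneous symplectic diagonalization---we may work in a coordinate system in which the factor $\Lambda_{1,2}$ defined in \eqref{Lambda} is transparent, and in which the invariance of the Weyl quantization under symplectic changes of variable guarantees no loss.

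The first estimate to extract is the gain $\Lambda_{1,2}^{-p}$. The operator $[\p_Y,\p_Z]^{p}$ applied to $a_1(Y)a_2(Z)$ yields a sum of terms $(\p^{\alpha}a_1)(Y)(\p^{\alpha'}a_2)(Z)$ with $|\alpha|=|\alpha'|=p$, each arising from $p$ symplectic contractions between derivatives of $a_1$ and of $a_2$. The key pointwise inequality
\begin{equation*}
|\sigma(T_1,T_2)|\leq g_j^\sigma(T_k)^{1/2}\,g_j(T_l)^{1/2},\qquad j\in\{1,2\},\ \{k,l\}=\{1,2\},
\end{equation*}
derived directly from the definition \eqref{eq.dual.met} of $g^\sigma$, lets us distribute each contraction either as $g_1^\sigma$ against $g_1$ or as $g_2^\sigma$ against $g_2$; optimizing the choice and invoking the confinement of $a_j$ in $U_j$ produces a product of $p$ factors bounded by $\Lambda_{1,2}^{-1}$ together with the seminorms $|\!|\!|a_j|\!|\!|_{g_j,U_j}^{(l)}$.

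To obtain the biconfinement decay, I would then integrate by parts repeatedly in the oscillatory integral against the phase $[X-Y,X-Z]=\sigma(X-Y,X-Z)$. Since $\p_Y$-derivatives of the exponential bring down factors proportional to $X-Z$ and conversely, one constructs weighted first-order operators $L_1,L_2$, schematically of the form $L_j\sim\sigma(\cdot,X-Z)\p_Y/g_j^\sigma(X-Z)$, with symplectic weights tuned so that $L_j e^{i\varphi}=e^{i\varphi}$. Each application costs one extra derivative on $a_1$ or $a_2$ and gains one power of $\bigl(1+g_j^\sigma(X-U_1)+g_j^\sigma(X-U_2)\bigr)^{-1/2}$ without disturbing the $\Lambda_{1,2}^{-p}$ factor already produced. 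Symmetrizing the procedure in $j=1,2$ replaces $g_j^\sigma$ by the harmonic mean $g_1^\sigma\wedge g_2^\sigma$ in the resulting weight. Differentiating $r_p$ in $X$ adds $k$ further symplectic pairings, each bounded by $(g_1+g_2)(T)^{1/2}$, and counting the integrations by parts needed for absolute convergence of the oscillatory integral ($2n+1$), for the gain ($p$), for the decay ($N$), and for differentiation ($k$), one reads off the seminorm order $l=2n+1+k+p+N$.

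The main obstacle lies in the bookkeeping of the integration-by-parts step: one must produce the \emph{harmonic mean} $g_1^\sigma\wedge g_2^\sigma$ in the decay weight---rather than merely the larger $g_1^\sigma$ or $g_2^\sigma$ alone---while simultaneously preserving every power of the small-scale gain $\Lambda_{1,2}^{-1}$ coming from the $p$ symplectic contractions, so that the two independent mechanisms do not interfere. The delicate point is to choose the vector fields $L_j$ so that the additional derivatives falling on $a_1,a_2$ remain controlled by the confinement seminorms of bounded order $l$, without consuming the $\Lambda_{1,2}^{-p}$ gain; this careful balancing of small-scale asymptotic gain against large-scale biconfinement decay is exactly what distinguishes the remainder biconfinement estimate of Theorem \ref{thm.biconf.r} from its order-zero counterpart Theorem \ref{thm.biconf}.
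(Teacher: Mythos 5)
First, note that this theorem is not proved in the paper at all: it is quoted from \cite[Theorem 2.3.4]{lerner}, and the text explicitly directs the reader to section 2.3 of that reference for the argument. There is therefore no in-paper proof against which to compare, and your reconstruction has to be judged on its own merits.

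Your sketch captures the right circle of ideas: starting from the integral form \eqref{eq.compo.asy3} of $r_p$, rewriting $\exp\big(\tfrac{\theta}{4i\pi}[\partial_Y,\partial_Z]\big)$ as an oscillatory integral, integrating by parts against the bilinear phase to extract biconfinement decay, and tracing the gain $\Lambda_{1,2}^{-p}$ to the $p$ symplectic contractions pairing derivatives of $a_1$ with derivatives of $a_2$. However, the proposed first step is wrong: two positive-definite quadratic forms on a symplectic vector space do \emph{not} in general admit a simultaneous symplectic diagonalization. A dimension count makes this clear already for $n=2$: the stabilizer in $Sp(4,\R)$ of the Euclidean form $g_1=|x|^2+|\xi|^2$ is $U(2)$, of dimension $4$, while positive-definite forms on $\R^4$ form a $10$-dimensional family and diagonal ones only a $4$-dimensional one, so a generic $g_2$ cannot be diagonalized by a $g_1$-preserving symplectic map. (The joint Williamson normal form you allude to fails even for $n=1$ unless $g_1$ and $g_2$ are proportional, since $\sum\lambda_j(x_j^2+\xi_j^2)$ and $\sum\mu_j(x_j^2+\xi_j^2)$ are necessarily proportional when $n=1$.) A correct proof must therefore work directly with $g_1$, $g_2$, $g_1^\sigma\wedge g_2^\sigma$, and $\Lambda_{1,2}$ without such a reduction. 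Finally, you yourself single out the ``main obstacle''---producing the harmonic-mean decay $\big(1+(g_1^\sigma\wedge g_2^\sigma)(X-U_1)+(g_1^\sigma\wedge g_2^\sigma)(X-U_2)\big)^{-N/2}$ while preserving every power of $\Lambda_{1,2}^{-1}$ and keeping the seminorm order at $l=2n+1+k+p+N$---and then leave it unresolved; since that bookkeeping \emph{is} the content of the theorem, what you have is an outline with a broken first step rather than a proof.
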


Now we use Theorem \ref{thm.biconf.r} to prove Lemma \ref{lem.fp.rest}.

\begin{proof}[Proof of Lemma \ref{lem.fp.rest}]
By the asymptotic formula \eqref{eq.compo.asy4}, we have
$$\psi_Y\sharp a_Y=a_Y+\frac{1}{4i\pi }\underbrace{\{\psi_Y,a_Y\}}_{=0}+r_2(\psi_Y,a_Y),$$
since $\psi_Y=1$ on the support of $a_Y$.
The symbol $\psi_Y$ is $g_Y$-confined in $U_{Y,2r}$ and $a_Y$ is $g_Y$-confined in $U_{Y,r}$, and moreover, we have
$$\forall l\in\N,\quad|\!|\!|\psi_Y|\!|\!|_{g_Y,U_{Y,2r}}^{(l)}=\|\psi_Y\|_{S(1,g_Y)}^{(l)},\quad|\!|\!|a_Y|\!|\!|_{g_Y,U_{Y,r}}^{(l)}=\lambda_g(Y)^2\|a_Y\|_{S(\lambda_g(Y)^2,g_Y)}^{(l)}.$$
Applying \eqref{eq.biconf.r} to $r_2(\psi_Y,a_Y)$, we have for all $k,N\in\N$, there exist $C(k,N,n)>0$, $l(k,N,n)\in\N$ such that for all $X,T\in\R^{2n}$,
\begin{align}
\notag&|\big(r_2(\psi_Y,a_Y)\big)^{(k)}(X)T^k|\\
\notag&\quad\leq C(k,N,n)|\!|\!|\psi_Y|\!|\!|_{g_Y,U_{Y,2r}}^{(l)}|\!|\!|a_Y|\!|\!|_{g_Y,U_{Y,r}}^{(l)}g_Y(T)^{k/2}\Lambda_{1,2}^{-2}\big(1+g_Y^\sigma(X-U_{Y,2r})\big)^{-N}\\
\label{eq.rest2}&\quad\leq C(k,N,n)\|\psi_Y\|_{S(1,g_Y)}^{(l)}\|a_Y\|_{S(\lambda_g(Y)^2,g_Y)}^{(l)}g_Y(T)^{k/2}\big(1+g_Y^\sigma(X-U_{Y,2r})\big)^{-N},
\end{align}
noticing here $\Lambda_{1,2}$ defined in \eqref{Lambda} is equal to $\lambda_g(Y)$.
An analogous estimate as \eqref{eq.rest2} holds for $r_2(a_Y,\psi_Y)$. 
In our case, we write $r_Y$, which is defined in \eqref{eq.fp.1},
\begin{align*}
r_Y&=(\psi_Y\sharp a_Y-a_Y)\sharp \psi_Y+(a_Y\sharp\psi_Y-a_Y)\\
&=r_2(\psi_Y,a_Y)\sharp \psi_Y+r_2(a_Y,\psi_Y).
\end{align*}
Then the estimate \eqref{eq.rest1} follows from \eqref{eq.rest2} and \eqref{eq.biconf}.
Furthermore, for any $k, N\in\N$, there exist $C=C(k,N,n,C_0)>0$, $l=l(k,N,n,C_0)\in\N$ such that 
$$\|\bar r_Y\sharp r_Z\|_{S(1,g_Y+g_Z)}^{(k)}\leq C\| a_Y\|_{S(\lambda_g(Y)^2,g_Y)}^{(l)}\|a_Z\|_{S(\lambda_g(Z)^2,g_Z)}^{(l)}\delta_{2r}(Y,Z)^{-N}.$$
Thus we can apply Cotlar's lemma and get the estimate \eqref{eq.fp.2}.
\end{proof}

\begin{lem}[Estimate for $\psi_Y$]\label{lem.fp.rest2}
For all $k, N\in\N$, there exist $C=C(k,N,C_0)>0$, $l=l(k,N,C_0)\in\N$ such that for all $X\in\R^{2n}$, $T\in\R^{2n}$ with $g_Y(T)\leq1$,
\begin{equation}\label{lem.fp.rest2}
|(\psi_Y\sharp\psi_Y)^{(k)}(X)T^k|\leq C\big(\|\psi_Y\|_{S(1,g_Y)}^{(l)}\big)^2\big(1+g_Y^\sigma(X-U_{Y,2r})\big)^{-N}.
\end{equation}
Moreover, there exists $C_2=C_2(n,C_0,N_0)>0$ such that
\begin{equation}\label{eq.fp.3}
\|\int\psi_Y^w\psi_Y^w|g_Y|^{1/2}dY\|_{{\mathcal L}(L^2(\R^n))}\leq C_2.
\end{equation}
\end{lem}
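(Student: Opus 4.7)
The plan is to follow exactly the two-step architecture used in the proof of Lemma \ref{lem.1} and Theorem \ref{thm.continuity}: first establish a pointwise/symbolic biconfinement estimate, then invoke Cotlar's lemma via Proposition \ref{prop.const} and Lemma \ref{lem.distance}.

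For the pointwise bound \eqref{lem.fp.rest2}, I would apply Theorem \ref{thm.biconf} with $g_1 = g_2 = g_Y$, $a_1 = a_2 = \psi_Y$, and $U_1 = U_2 = U_{Y,2r}$. Since $\psi_Y$ is supported in the $g_Y$-ball $U_{Y,2r}$ and satisfies \eqref{psi}, it is $g_Y$-confined in $U_{Y,2r}$ with $|\!|\!|\psi_Y|\!|\!|_{g_Y,U_{Y,2r}}^{(l)} = \|\psi_Y\|_{S(1,g_Y)}^{(l)}$. Noting $g_Y^\sigma \wedge g_Y^\sigma = g_Y^\sigma$, the conclusion of the theorem reads
\[
|(\psi_Y\sharp\psi_Y)^{(k)}(X)T^k| \leq \gamma(k,N,n)\bigl(\|\psi_Y\|_{S(1,g_Y)}^{(l)}\bigr)^2\, (2g_Y(T))^{k/2}\bigl(1+2g_Y^\sigma(X-U_{Y,2r})\bigr)^{-N/2},
\]
and the factor $g_Y(T) \leq 1$ together with absorbing the numerical constants and doubling $N$ yields the stated form.

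For the operator bound \eqref{eq.fp.3}, set $A_Y = \psi_Y^w \psi_Y^w$ and $d\nu(Y) = |g_Y|^{1/2}\,dY$. Since $\psi_Y$ is real-valued, each $A_Y$ is self-adjoint, so in Cotlar's lemma (Lemma \ref{lem.cotlar}) it suffices to control $\|A_Y A_Z\|_{\mathcal{L}(L^2(\R^n))}$. The symbol of $A_Y A_Z$ is $(\psi_Y\sharp\psi_Y)\sharp(\psi_Z\sharp\psi_Z)$. By the first part, $\psi_Y\sharp\psi_Y$ is $g_Y$-confined in $U_{Y,2r}$ and $\psi_Z\sharp\psi_Z$ is $g_Z$-confined in $U_{Z,2r}$, with confinement semi-norms bounded uniformly in $Y,Z$ thanks to \eqref{psi}. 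A second application of Theorem \ref{thm.biconf} together with the triangular inequality
\[
(g_Y^\sigma \wedge g_Z^\sigma)(X-U_{Y,2r}) + (g_Y^\sigma \wedge g_Z^\sigma)(X-U_{Z,2r}) \geq \tfrac{1}{2}(g_Y^\sigma \wedge g_Z^\sigma)(U_{Y,2r} - U_{Z,2r})
\]
produces, for any prescribed $N \in \N$, an estimate
\[
\|(\psi_Y\sharp\psi_Y)\sharp(\psi_Z\sharp\psi_Z)\|_{S(1,g_Y+g_Z)}^{(2n+1)} \leq C(n,C_0,N)\, \delta_{2r}(Y,Z)^{-N}.
\]
Proposition \ref{prop.const}, applied to the constant metric $g_Y + g_Z$, converts this into the operator estimate $\|A_Y A_Z\|_{\mathcal{L}(L^2(\R^n))} \leq C\, \delta_{2r}(Y,Z)^{-N}$. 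Choosing $N = 2N_1$ with $N_1$ as in Lemma \ref{lem.distance} makes the integrability hypotheses of Cotlar's lemma hold with $M = C_2(n,C_0,N_0)$, and the conclusion \eqref{eq.fp.3} follows.

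The main obstacle is essentially bookkeeping rather than conceptual: one must track that every semi-norm appearing in the biconfinement constants is uniform in $Y$ and $Z$ (which holds since \eqref{psi} gives bounds depending only on $k,r,C_0$), and then arrange $N$ large enough so that the $\delta_{2r}(Y,Z)^{-N}$ decay controls the integral over $Z$ against $|g_Z|^{1/2}dZ$. In contrast to Lemma \ref{lem.1}, no semi-norm of an external symbol enters the final constant $C_2$, because $\psi_Y$ is part of the intrinsic partition machinery and its semi-norms are themselves structural.
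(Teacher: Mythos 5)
Your argument is correct and follows the paper's own proof step by step: the first inequality comes from a single application of the biconfinement Theorem~\ref{thm.biconf} with $g_1=g_2=g_Y$, $U_1=U_2=U_{Y,2r}$ (the paper states this in one line), and the operator bound then comes from a second application of Theorem~\ref{thm.biconf} to $(\psi_Y\sharp\psi_Y)\sharp(\psi_Z\sharp\psi_Z)$ using the confinement estimate just proved together with \eqref{psi}, followed by Proposition~\ref{prop.const}, the choice $N=2N_1$, and Cotlar's lemma. The only difference is that you spell out the intermediate steps (self-adjointness of $A_Y$, the identification $|\!|\!|\psi_Y|\!|\!|_{g_Y,U_{Y,2r}}^{(l)}=\|\psi_Y\|_{S(1,g_Y)}^{(l)}$, the reduction of the $\wedge$ to $g_Y^\sigma$) that the paper leaves implicit.
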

\begin{proof}
The inequality \eqref{lem.fp.rest2} follows immediately from \eqref{eq.biconf}.
And it follows from \eqref{eq.biconf}, \eqref{psi} and \eqref{lem.fp.rest2} that for all $k,N\in\N$,
$$\|(\psi_Y\sharp\psi_Y)\sharp(\psi_Z\sharp\psi_Z)\|_{S(1,g_Y+g_Z)}^{(k)}\leq C\delta_{2r}(Y,Z)^{-N},$$
for some $C=C(k,N,n,C_0)>0$. Then by choosing $N=2N_1$ and using Cotlar's lemma, we get the estimate \eqref{eq.fp.3}.
\end{proof}

\begin{proof}[End of the proof of Theorem \ref{thm.fp}]
The symbol $a_Y$ is non-negative and uniformly in $S(\lambda_g(Y)^2,g_Y)$, so that we can apply the Fefferman-Phong inequality (Theorem \ref{thm.fp.cons}) for the constant metric $g_Y$ to get
$$a_Y^w+C(n)\|a_Y\|_{S(\lambda_g(Y)^2,g_Y)}^{(l(n))}\geq0.$$
By Proposition \ref{prop.parti.a} and Lemma \ref{lem.lambda}, we have
$$\|a_Y\|_{S(\lambda_g(Y)^2,g_Y)}^{(l(n))}\leq C(n,C_0,N_0)\|a\|_{S(\lambda_g^2,g)}^{(l(n))},$$
so that
\begin{equation}\label{eq.fp.4}
a_Y^w+C_3\|a\|_{S(\lambda_g^2,g)}^{(l(n))}\geq0.
\end{equation}
where $C_3=C_3(n,C_0,N_0)>0$, $l(n)\in\N$ are constants. Combining \eqref{eq.fp.1}, \eqref{eq.fp.2}, \eqref{eq.fp.3} and \eqref{eq.fp.4}, we obtain
\begin{align*}
a^w&=\int_{\R^{2n}}a_Y^w|g_Y|^{1/2}dY\\
&=\int_{\R^{2n}}\psi_Y^wa_Y^w\psi_Y^w|g_Y|^{1/2}dY-\int_{\R^{2n}}r_Y^w|g_Y|^{1/2}dY\\
&\geq -C_3\|a\|_{S(\lambda_g^2,g)}^{(l(n))}\int_{\R^{2n}}\psi_Y^w\psi_Y^w|g_Y|^{1/2}dY-C_1\|a\|_{S(\lambda_g^2,g)}^{(l_1)}\\
&\geq-C\|a\|_{S(\lambda_g^2,g)}^{(l)},
\end{align*}
for some $C=C(n,C_0,N_0)>0$ and $l=l(n,C_0,N_0)\in\N$. The proof of Theorem \ref{thm.fp} is complete.
\end{proof}


\bibliography{constweyl_1}

\providecommand{\bysame}{\leavevmode\hbox to3em{\hrulefill}\thinspace}
\providecommand{\MR}{\relax\ifhmode\unskip\space\fi MR }
\providecommand{\MRhref}[2]{%
  \href{http://www.ams.org/mathscinet-getitem?mr=#1}{#2}
}
\providecommand{\href}[2]{#2}
\begin{thebibliography}{1}

\bibitem{bony}
Jean-Michel Bony, \emph{Sur l'in\'egalit\'e de {F}efferman-{P}hong}, Seminaire:
  \'{E}quations aux {D}\'eriv\'ees {P}artielles, 1998--1999, S\'emin. \'Equ.
  D\'eriv. Partielles, \'Ecole Polytech., Palaiseau, 1999, pp.~Exp. No. III,
  16. \MR{1721321 (2000i:35232)}

\bibitem{MR1259109}
Jean-Michel Bony and Jean-Yves Chemin, \emph{Espaces fonctionnels associ\'es au
  calcul de {W}eyl-{H}\"ormander}, Bull. Soc. Math. France \textbf{122} (1994),
  no.~1, 77--118. \MR{1259109 (95a:35152)}

\bibitem{boulkhemair}
Abdesslam Boulkhemair, \emph{On the {F}efferman-{P}hong inequality}, Ann. Inst.
  Fourier (Grenoble) \textbf{58} (2008), no.~4, 1093--1115. \MR{2427955
  (2009h:35468)}

\bibitem{MR2199222}
Nils Dencker, \emph{The resolution of the {N}irenberg-{T}reves conjecture},
  Ann. of Math. (2) \textbf{163} (2006), no.~2, 405--444. \MR{2199222
  (2006i:35386)}

\bibitem{hor}
Lars H{\"o}rmander, \emph{The analysis of linear partial differential
  operators. {III}}, Classics in Mathematics, Springer, Berlin, 2007,
  Pseudo-differential operators, Reprint of the 1994 edition. \MR{2304165
  (2007k:35006)}

\bibitem{lerner}
Nicolas Lerner, \emph{Metrics on the phase space and non-selfadjoint
  pseudo-differential operators}, Pseudo-Differential Operators. Theory and
  Applications, vol.~3, Birkh\"auser Verlag, Basel, 2010. \MR{2599384}

\bibitem{LM}
Nicolas Lerner and Yoshinori Morimoto, \emph{On the {F}efferman-{P}hong
  inequality and a {W}iener-type algebra of pseudodifferential operators},
  Publ. Res. Inst. Math. Sci. \textbf{43} (2007), no.~2, 329--371. \MR{2341014
  (2009b:47083)}

\end{thebibliography}
\nocite{*}
\bibliographystyle{amsplain}

\end{document}